\definecolor{mygreen}{rgb}{0,0.6,0}
\newtheorem{thm}{Theorem}[section]
\newtheorem{lem}[thm]{Lemma}
\newtheorem{cor}[thm]{Corollary}
\theoremstyle{definition}
\newtheorem{fact}[thm]{Fact}
\newtheorem{remark}[thm]{Remark}
\newtheorem{exampleA}[thm]{Example}
\newtheorem{questA}[thm]{Question}
\newtheorem{A}[thm]{}
\newtheorem*{ac}{Acknowledgments}
\newcommand{\hocolim@}[2]{%
  \vtop{\m@th\ialign{##\cr
    \hfil$#1\operator@font hocolim$\hfil\cr
    \noalign{\nointerlineskip\kern1.5\ex@}#2\cr
    \noalign{\nointerlineskip\kern-\ex@}\cr}}%
}
\newcommand{\dhocolim}{%
  \mathop{\mathpalette\hocolim@{\rightarrowfill@\textstyle}}\nmlimits@
}
\newcommand{\EMP}{\textbf}
\newcommand*{\Perp}[1]{{}^{\perp_{#1}}}
\DeclareMathOperator*{\Spec}{Spec}
\DeclareMathOperator*{\Supp}{Supp}
\DeclareMathOperator*{\Cat}{Cat}
\DeclareMathOperator*{\CAT}{CAT}
\DeclareMathOperator*{\holim}{holim}
\DeclareMathOperator*{\hocolim}{hocolim}
\DeclareMathOperator*{\Mlim}{Mlim}
\DeclareMathOperator*{\Mcolim}{Mcolim}
\DeclareMathOperator*{\ModR}{Mod-\textit{R}}
\DeclareMathOperator*{\op}{op}
\def\RGamma{\operatorname{\mathbf{R}\Gamma}}
\def\Hom{\operatorname{Hom}}
\def\RHom{\operatorname{\mathbf{R}Hom}} 
\def\Rlim{\operatorname{\mathbf{R}lim}}
\def\Lcolim{\operatorname{\mathbf{L}colim}}
\def\Der{\operatorname{\mathbf{D}}}
\def\Ch{\operatorname{\mathbf{C}}}
\def\dia{\operatorname{dia}}
\newcommand{\DD}{\ensuremath{\mathbb{D}}}
\newcommand{\Z}{\ensuremath{\mathbb{Z}}}
\newcommand{\pp}{\mathfrak{p}}
\newcommand{\qq}{\mathfrak{q}}
\newcommand{\Gcal}{\ensuremath{\mathcal{G}}}
\newcommand{\Scal}{\ensuremath{\mathcal{S}}}
\newcommand{\Ucal}{\ensuremath{\mathcal{U}}}
\newcommand{\Vcal}{\ensuremath{\mathcal{V}}}
\newcommand{\Wcal}{\ensuremath{\mathcal{W}}}
\newcommand{\Tcal}{\ensuremath{\mathcal{T}}}
\newcommand{\Lcal}{\ensuremath{\mathcal{L}}}
\newcommand{\Ecal}{\ensuremath{\mathcal{E}}}
\newcommand{\Ccal}{\ensuremath{\mathcal{C}}}
\newcommand{\Hcal}{\ensuremath{\mathcal{H}}}
\newcommand{\Xscr}{\ensuremath{\mathscr{X}}}
\newcommand{\Vscr}{\ensuremath{\mathscr{V}}}
\title[
Telescope conjecture for homotopically smashing t-structures]{Telescope conjecture for homotopically smashing t-structures over commutative noetherian rings}
\author{Michal Hrbek}
\address[M. Hrbek]{Institute of Mathematics of the Czech Academy of Sciences, \v{Z}itn\'{a} 25, 115 67 Prague, Czech Republic}
\email{hrbek@math.cas.cz}
\author{Tsutomu Nakamura}
\address[T. Nakamura]{Graduate School of Mathematics, Nagoya University, Furocho, Chikusaku, Nagoya 464-8602, Japan} 
\email{tsutomu.nakamura@math.nagoya-u.ac.jp}
\thanks{The first author was supported by the GAČR project 20-13778S and RVO: 67985840. The second author was supported by the Program Ricerca di Base 2015 of the University of Verona.}
\keywords{Derived category; telescope conjecture; t-structure; smashing; derivator; cosilting object.}
\subjclass[2010]{Primary 13D09. Secondary 18E30.}
\begin{document}
\maketitle
%%%%%%%%%%%%%%%%%%%%%%%%%%
%%%%%%%%%%%%%%%%%%%%%%%%%%

\begin{abstract}
	We show that any homotopically smashing t-structure in the derived category of a commutative noetherian ring is compactly generated. This generalizes the validity of the telescope conjecture for commutative noetherian rings due to Neeman. As another consequence, we obtain a cofinite type result for pure-injective cosilting objects. We also give a formulation of telescope conjecture for homotopically smashing t-structures in underlying triangulated categories of certain Grothendieck derivators.
\end{abstract}

%\setcounter{tocdepth}{1} \tableofcontents
%%%%%%%%%%%%%%%%%%%%%%%%%%
%%%%%%%%%%%%%%%%%%%%%%%%%%

\section{Introduction}
	A Bousfield localization of a triangulated category is called smashing if it commutes with all coproducts. The telescope conjecture asks whether any such smashing localization is generated by compact objects. The question was originally asked by Ravenel for the stable homotopy category of spectra \cite{R}, and in this context the question remains open. For algebraic triangulated categories however, several strong results have been obtained. Namely, in the setting of derived categories of modules, the telescope conjecture was established for an arbitrary commutative noetherian ring by Neeman \cite{N1}. Among other results in this direction, Krause and \v{S}\v{t}ov\'{i}\v{c}ek \cite{KS} showed that the telescope conjecture holds in the derived category of a (one-sided) hereditary ring, and a ring-theoretic criterion equivalent to the telescope conjecture was given for commutative rings of weak global dimension one by Bazzoni and \v{S}\v{t}ov\'{i}\v{c}ek \cite{BS}.
On the other hand, examples of rings for which the question has a negative answer have been found, the first one is due to Keller \cite{Ke}.

	The notion of a t-structure was introduced by Be\u{\i}linson, Bernstein, and Deligne \cite{BBD} as a general framework for constructing cohomological functors from triangulated categories to abelian categories. In derived categories, the t-structures provide a natural habitat for tilting theory, that is, for the study of derived equivalences and their realizations, see e.g. \cite{AS}, \cite{NSZ}, \cite{PV}, and \cite{AMV}. Similarly to the case of Bousfield localization, the theory becomes much more tractable if we restrict it to t-structures generated by compact objects. In particular, such t-structures sometimes allow for a full classification. For commutative noetherian rings, a bijective correspondence between compactly generated t-structures and filtrations of the Zariski spectrum by supports was established by Alonso Tarr\'{i}o, Jerem\'{i}as L\'{o}pez and Saor\'{i}n \cite{AJS}, see also Theorem~\ref{T:AJS}. This was further generalized to arbitrary commutative rings \cite{Hcom}.

	Since Bousfield localizations correspond to stable t-structures (= triangulated t-structures), it is natural to look for a more general formulation of the telescope conjecture suitable for t-structures which are not necessarily stable. The condition of preserving coproducts for localizations does not translate well into the case of non-stable t-structures. In fact, any (possibly non-hereditary) torsion pair in the module category induces a t-structure in the derived category such that the coaisle is closed under coproducts (\cite[Example 6.2]{SSV}), and therefore even over the ring of integers, there is a proper class of such t-structures (see \cite[Theorem 4.1]{GS}). 

	Recently, Saor\'{i}n, \v{S}\v{t}ov\'{i}\v{c}ek, and Virili introduced in \cite{SSV} the homotopically smashing t-structures, that is, t-structures whose coaisles are closed under directed homotopy colimits inside a Grothendieck derivator. Under mild assumptions on the derivator, homotopically smashing t-structures encompass two well-studied situations. For stable t-structures, this condition is known to be equivalent to the induced localization functor being smashing (see \ref{SS:stable}). For t-structures which are non-degenerate, this condition characterizes the case when the t-structure is induced by a pure-injective cosilting object, giving a strong relation to tilting theory; such a fact was recently shown by Laking \cite{L}. Furthermore, if a t-structure is  homotopically smashing and non-degenerate, then its heart admits exact direct limits \cite[Theorem B]{SSV}, and is often a Grothendieck category (\cite[Theorem C]{SSV}, \cite[Corollary 3.8]{AMV2}, \cite[Theorem 4.6]{L}). 

	In this way, we arrive at the following natural generalization of the telescope conjecture: When is it true that every homotopically smashing t-structure is generated by compact objects? As seen from the previous paragraph, this question naturally extends the telescope conjecture for stable t-structures, and it can yield a cofinite type result for pure-injective cosilting objects for non-degenerate t-structures (see Corollary~\ref{C:cosilting} and \ref{SS:cosilting}). We remark that by extending the scope of the question to non-stable t-structures, a larger supply of counterexamples is available, see \cite[Corollary 8.5]{BH}. Nevertheless, the aim of this paper is to show that the answer to this more general question is still affirmative in the case of a commutative noetherian ring. That is, we prove the following:
\begin{thm}\label{T01}
	Let $R$ be a commutative noetherian ring. Then any homotopically smashing t-structure in the unbounded derived category $\Der(\ModR)$ of $R$ is compactly generated.
\end{thm}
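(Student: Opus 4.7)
My plan is to assign to any homotopically smashing t-structure $(\mathcal{U},\mathcal{V})$ a canonical Thomason filtration $\Phi$ of $\Spec R$ and to prove that $(\mathcal{U},\mathcal{V})$ coincides with the compactly generated t-structure $(\mathcal{U}_\Phi,\mathcal{V}_\Phi)$ attached to $\Phi$ by the Alonso Tarr\'{i}o--Jerem\'{i}as L\'{o}pez--Saor\'{i}n classification (Theorem~\ref{T:AJS}). I set
\[\Phi(n)=\{\mathfrak{p}\in\Spec R\mid R/\mathfrak{p}[-n]\in\mathcal{U}\}\quad (n\in\Z).\]
Closure of $\mathcal{U}$ under extensions, positive shifts, and coproducts, together with the fact that every finitely generated $R$-module admits a finite prime filtration, forces $\Phi$ to be a descending filtration of $\Spec R$ by specialization-closed subsets; in the noetherian case these are exactly the Thomason subsets used in Theorem~\ref{T:AJS}.

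For the inclusion $\mathcal{U}_\Phi\subseteq\mathcal{U}$, I use that $\mathcal{U}_\Phi$ is the smallest pre-aisle closed under coproducts containing $\{R/I[-n]\mid V(I)\subseteq\Phi(n)\}$. If $V(I)\subseteq\Phi(n)$, then every prime minimal over $I$ lies in $\Phi(n)$; iterating the prime filtration of $R/I$ and using closure of $\mathcal{U}$ under extensions gives $R/I[-n]\in\mathcal{U}$, and the inclusion follows.

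The converse inclusion $\mathcal{V}_\Phi\subseteq\mathcal{V}$ is the main obstacle. The idea is to present an arbitrary $X\in\mathcal{V}_\Phi$ as a filtered homotopy colimit of pieces already known to lie in $\mathcal{V}$, exploiting that $\mathcal{V}$ is reflective (hence closed under products) and, by hypothesis, closed under directed homotopy colimits in a chosen underlying derivator. A natural route uses the local cohomology functors $\RGamma_Z$ attached to Thomason subsets $Z\subseteq\Spec R$, represented concretely by stable Koszul / \v{C}ech complexes---which are directed homotopy colimits of compact objects---combined with a prime-by-prime analysis of residue fields $\kappa(\mathfrak{p})$, whose location in $\mathcal{U}$ or $\mathcal{V}$ is governed directly by the definition of $\Phi$. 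An induction along $\Phi$ should then yield $X\in\mathcal{V}$. The difficulty is that $\mathcal{V}$ is not assumed closed under arbitrary coproducts, so purely triangulated arguments in the style of Neeman~\cite{N1} do not transfer and the approximations must be lifted to the derivator level. I also expect to need the structure theory of pure-injective modules over commutative noetherian rings (Matlis duality and its derived refinements) to control the local pieces, consistent with the cosilting interpretation of the theorem (Corollary~\ref{C:cosilting}).
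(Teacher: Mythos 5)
Your overall frame---pinning the t-structure against the Alonso Tarr\'{i}o--Jerem\'{i}as L\'{o}pez--Saor\'{i}n classification (Theorem~\ref{T:AJS}) and proving two inclusions---is reasonable, and the easy inclusion $\Ucal_\Phi\subseteq\Ucal$ is essentially fine, although your justification that each $\Phi(n)$ is specialization closed is circular as stated: a prime filtration of $R/\qq$ for $\qq\supseteq\pp$ has factors $R/\qq_i$ with $\qq_i\supseteq\qq$, so it never reaches back to $R/\pp$; you need instead the Koszul/cone-type argument (as in \cite{AJS}) showing that the smallest coproduct-closed pre-aisle containing $R/\pp[-n]$ contains $N[-n]$ for every finitely generated $N$ supported in $V(\pp)$. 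The genuine gap is the inclusion $\Vcal_\Phi\subseteq\Vcal$, which you yourself flag and then only gesture at. Note first that the difficulty you name is not the real one: for a homotopically smashing t-structure $\Vcal$ \emph{is} closed under arbitrary coproducts (coproducts are directed homotopy colimits of finite sub-coproducts), and mere coproduct-closure of the coaisle is known to be insufficient for the conclusion, so any correct argument must use the directed-homotopy-colimit hypothesis in an essential and identifiable way; your sketch does not say where. Second, your filtration is read off the aisle via $R/\pp[-n]\in\Ucal$, and this gives you no handle on what lies in $\Vcal$: the dichotomy ``$\kappa(\pp)[-n]\in\Ucal$ or $\kappa(\pp)[-n]\in\Vcal$'' (Corollary~\ref{L01}) holds because $\RHom_R(\kappa(\pp),Y)$ splits into shifted $\kappa(\pp)$-vector spaces, and there is no such dichotomy for $R/\pp[-n]$; in particular, from $\pp\notin\Phi(n)$ you cannot conclude that $E(R/\pp)[-n]\in\Vcal$, which is exactly what a prime-by-prime/Matlis-style argument needs.

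Concretely, the steps missing from your plan are the ones that carry the proof in the paper: (i) the residue-field dichotomy just mentioned, which is why the filtration must be defined from the coaisle (via $E(R/\pp)[-n]\in\Vcal$, equivalently $\kappa(\pp)[-n]\in\Vcal$ by Lemma~\ref{L02}, whose proof is one of the two essential uses of closure under directed homotopy colimits, through the $\kappa(\pp)$-filtration of $E(R/\pp)$); (ii) a mechanism to handle objects of $\Vcal$ that are cohomologically unbounded below---this is precisely why \cite[Theorem 4.2]{Hcom} was limited to the bounded-below case, and the paper resolves it by applying the Foxby--Iyengar equality $\inf\RGamma_{\pp}(X(\pp))=\inf\RHom_R(\kappa(\pp),X(\pp))$ to the pieces $X(\pp)=\Gamma_{\pp}(X_\pp)$ of a complex of injectives $X\in\Vcal$ (Lemma~\ref{L06}), so that one knows from exactly which degree on $E(R/\pp)[-n]$ lies in $\Vcal$; a degreewise \v{C}ech/stable-Koszul approximation, as you propose, does not by itself control this; and (iii) a transfinite (Zorn's lemma) assembly of $X$ from the $X(\pp)$ along specialization closed subsets, again using closure under directed homotopy colimits (Lemma~\ref{L07}). ``An induction along $\Phi$ should then yield $X\in\Vcal$'' is, at present, a statement of hope rather than an argument; without ingredients playing the roles of (i)--(iii), the hard inclusion remains unproved.
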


	A version of Theorem~\ref{T01} restricted to t-structures which are cohomologically bounded below was proved in \cite[Theorem 4.2]{Hcom} by a technique which does not seem to generalize for unbounded complexes. The direction in which we aim to proceed here would be close to the way of Neeman's proof of the stable case in \cite{N1}. However, there are several differences which we should emphasize. 
	
	As shown in \cite{N1}, the localizing subcategories of $\Der(\ModR)$ bijectively correspond to the subsets of the Zariski spectrum, and the telescope conjecture follows as a consequence. 
For arbitrary t-structures, no such reasonable classification can exist, as the derived category $\Der(\ModR)$ usually has a proper class of them. 
Alternatively, there is a classification of compactly generated t-structures in $\Der(\ModR)$ in terms of filtrations of specialization-closed subsets of the Zariski spectrum (\cite{AJS}), although this classification does not directly tell us whether any homotopically smashing t-structure in $\Der(\ModR)$ is compactly generated.
Actually, to prove Theorem \ref{T01}, we need to concentrate more on the coaisle --- the right hand subcategory of the t-structure in question --- and one of the main steps is to show that these are cogenerated by shifts of stalk complexes of indecomposable injective modules.
Moreover, our proof of the theorem can be completed thanks to a result of Foxby and Iyengar \cite{FI} dealing with infima of unbounded complexes. In the proof of Lemma \ref{L06}, their result successfully works as a suitable method to solve a difficulty of non-stable t-structures.

The main aim of the next section is to prove Theorem~\ref{T01}. 
Motivated by this result, we will give in the appendix  a formulation of telescope conjecture for homotopically smashing t-structures in underlying triangulated categories of certain Grothendieck derivators. For this reason, the reader is referred to the appendix for most of the details and references concerning homotopy (co)limits and homotopically smashing t-structures.

%%%%%%%%%%%
\medskip
\noindent {\bf Convention.} Throughout this paper, subcategories of a given category are assumed to be full, additive and strict.

%%%%%%%%%%%%%%%%%%%%%%%%%%%
%%%%%%%%%%%%%%%%%%%%%%%%%%%

\section{Homotopically smashing t-structures over commutative noetherian rings}\label{S:rings}

%%%%%%Basic notations and t-structures%%%%%%

We start with preparation of basic notations and definitions. Let $\Tcal$ be a triangulated category with all set-indexed products and coproducts (for example, the unbounded derived category $\Der(\ModR)$ of a ring $R$, or the underlying category $\DD(\star)$ of a stable derivator $\DD$). 
For a class $\Ccal$ of objects in $\Tcal$ and a subset $B$ of $\Z$, we define the following subcategories:
$$\Ccal\Perp{B} = \{X \in \Tcal \mid \Hom{}_\Tcal(C,X[i]) = 0 ~\forall C \in \Ccal, ~\forall i \in B\}$$
and
$$\Perp{B}\Ccal = \{X \in \Tcal \mid \Hom{}_\Tcal(X,C[i]) = 0 ~\forall C \in \Ccal, ~\forall i \in B\}.$$
\noindent The role of the symbol $B$ will be played by symbols $0$, $\leq 0$, $>0$ interpreted as subsets of $\Z$ in the obvious way.
A pair $(\Ucal,\Vcal)$ of subcategories of $\Tcal$ is called a \EMP{t-structure} in $\Tcal$ if the following axioms are satisfied:
\begin{enumerate}
	\item[(i)] $\Hom_{\Tcal}(U,V) = 0$ for all $U \in \Ucal$ and $V \in \Vcal$,
	\item[(ii)] for any object $X \in \Tcal$ there is a triangle $U \rightarrow X \rightarrow V \rightarrow U[1]$
in $\Tcal$ with $U \in \Ucal$ and $V \in \Vcal$, and
	\item[(iii)] $\Ucal$ is closed under the suspension functor $[1]$.
\end{enumerate}

\noindent We call $\Ucal$ the \EMP{aisle} of the t-structure $(\Ucal, \Vcal)$. Moreover, given a subcategory $\Ucal$ of $\Tcal$, we call $\Ucal$ an aisle if there is a t-structure having $\Ucal$ as the aisle. Analogously, the same custom is applied for the subcategory $\Vcal$ and the term \EMP{coaisle}.
Note that any aisle is closed under direct summands and all coproducts in $\Tcal$. Dually, any coaisle is closed under direct summands and under all products in $\Tcal$. Furthermore, any coaisle  $\Vcal$ is closed under the cosuspension functor $[-1]$. For more details about t-structures in triangulated categories, see \cite{BBD} and \cite{KV}.

Denote by $\Tcal^c$ the subcategory of $\Tcal$ consisting of all \EMP{compact} objects, that is, objects $C \in \Tcal$ such that $\Hom_\Tcal(C,-)$ naturally preserves all coproducts in $\Tcal$. Suppose that $\Tcal^c$ is skeletally small.
We say that a triangulated category $\Tcal$ is \EMP{compactly generated} if $X\in (\Tcal^c)^{\perp_0}$ implies $X = 0$ for any $X \in \Tcal$.
A t-structure $(\Ucal,\Vcal)$ in $\Tcal$ is \EMP{compactly generated} if  $\Vcal = \Ccal\Perp{0}$ for some subcategory $\Ccal$ of $\Tcal^c$.  For a resource on compactly generated t-structures suitable for our application, we refer the reader to \cite{AJS} and the references therein.

%%%%%%Basic notations and t-structures%%%%%%

In this section, we will work in the unbounded derived category $\Der(\ModR)$ of the module category $\ModR$ of a commutative noetherian ring $R$. 
As explained in the appendix, we consider $\Der(\ModR)$ as the underlying category of the standard derivator $\DD_{\ModR}$ of the Grothendieck category $\ModR$, which in particular allows us to talk about directed homotopy colimits and homotopically smashing t-structures in $\Der(\ModR)$.
However, in this setting we may also use the more direct equivalent definition explained below; Theorem~\ref{T01} will be proved based on it.

\begin{fact}\label{fact} A subcategory $\mathcal{C}$ of $\Der(\ModR)$ is closed under directed homotopy colimits if and only if for any directed diagram $\{X_i\}_{i\in I}$ in the category $\Ch(\ModR)$ of cochain complexes such
that all components $X_i$ are objects in $\mathcal{C}$, the direct limit
$\varinjlim_{i\in I} X_i$ in $\Ch(\ModR)$ belongs to $\mathcal{C}$.
Therefore, a homotopically smashing t-structure in $\Der(\ModR)$ is a t-structure $(\Ucal,\Vcal)$ whose coaisle $\Vcal$ is closed under direct limits of its objects in the level of $\Ch(\ModR)$. For more details, see Example \ref{example derivator}. 
\end{fact}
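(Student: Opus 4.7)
The plan is to identify directed homotopy colimits in the standard derivator $\DD_{\ModR}$ with ordinary direct limits in $\Ch(\ModR)$ applied to a strict model of the diagram. The key input is the classical fact that filtered (hence directed) direct limits in the Grothendieck category $\ModR$ are exact, and that this exactness lifts componentwise to $\Ch(\ModR)$. Consequently, for any strictly commutative directed diagram $X_\bullet\colon I\to\Ch(\ModR)$, the ordinary colimit $\varinjlim_{i\in I} X_i$ preserves quasi-isomorphisms in the argument and therefore represents, up to canonical isomorphism in $\Der(\ModR)$, the derivator-theoretic homotopy colimit of the induced coherent diagram. No fibrant or cofibrant replacement is needed to compute it.

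Granted this, the equivalence is straightforward in both directions. If $\mathcal{C}$ is closed under directed homotopy colimits and $X_\bullet\colon I\to\Ch(\ModR)$ is a directed diagram with each $X_i\in\mathcal{C}$, then by the preceding paragraph $\varinjlim_{i\in I} X_i$ is isomorphic in $\Der(\ModR)$ to a homotopy colimit of a coherent diagram landing in $\mathcal{C}$, and hence lies in $\mathcal{C}$. Conversely, given any coherent directed diagram in $\DD_{\ModR}(I)$ with values in $\mathcal{C}$, standard rectification for the Grothendieck derivator $\DD_{\ModR}$ replaces it by a strict diagram $I\to\Ch(\ModR)$ whose pointwise values are quasi-isomorphic to the original objects of $\mathcal{C}$, and therefore still lie in $\mathcal{C}$ (as $\mathcal{C}$ is full and isomorphism-closed in $\Der(\ModR)$). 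The ordinary direct limit of this strict model both belongs to $\mathcal{C}$ by hypothesis and realizes the original homotopy colimit.

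The principal technical point --- and essentially the only non-bookkeeping step --- is the rectification used in the converse direction, together with the identification of the homotopy colimit with the naive colimit. Both are standard properties of the standard derivator of a Grothendieck abelian category and are recorded with references in the appendix and in \cite{SSV}; the second ingredient (colimit = homotopy colimit for filtered diagrams in $\Ch(\ModR)$) is the derivator-level avatar of exactness of filtered colimits in $\ModR$. The final ("therefore") assertion of the statement is then an immediate specialization, since by definition a homotopically smashing t-structure is precisely one whose coaisle $\Vcal$ is closed under directed homotopy colimits.
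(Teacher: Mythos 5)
Your proposal is correct and follows essentially the same route as the paper, which justifies the Fact via Example~\ref{example derivator}: since $\DD_{\ModR}(I)=\Der((\ModR)^I)$, every coherent directed diagram is represented by a strict diagram in $\Ch(\ModR)^I$, and exactness of directed colimits gives $\hocolim_I\Xscr=\Lcolim_I\Xscr\simeq\varinjlim_I X_i$ computed in $\Ch(\ModR)$ (cf.\ the proof of Proposition~6.6 in \cite{St}). Your ``rectification'' step is exactly this built-in feature of the standard derivator, so the two arguments coincide.
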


Let $\Spec(R)$ denote the Zariski spectrum of $R$. The support of an $R$-module $M$ is defined as $\Supp M=\{\pp\in \Spec (R) \mid M_\pp\neq 0\}$. 
A subset $V$ of $\Spec (R)$ is \EMP{specialization closed} if 
it is a union of Zariski closed subsets of $\Spec (R)$.

An \EMP{sp-filtration} is a map $\phi: \Z \rightarrow 2^{\Spec(R)}$, such that $\phi(i)$ is a specialization closed subset of $\Spec(R)$ for each $i \in \Z$, and such that $\phi$ is decreasing, that is, $\phi(i) \subseteq \phi(j)$ if $j \leq i$. 
Using this notion, Alonso Tarr\'{i}o, Jerem\'{i}as L\'{o}pez and Saor\'{i}n classified the compactly generated t-structures in $\Der(\ModR)$.

\begin{thm}\label{T:AJS}\emph{(\cite[Theorem 3.10, Theorem 3.11]{AJS})}
	Let $R$ be a commutative noetherian ring. Then there is a bijective correspondence

$$\left \{ \begin{tabular}{ccc} \text{ sp-filtrations $\phi$ } \\ \text{of $\Spec(R)$} \end{tabular}\right \} \leftrightarrow   \left \{ \begin{tabular}{ccc} \text{ compactly generated } \\ \text{ t-structures $(\Ucal,\Vcal)$ in $\Der(\ModR)$ } \end{tabular}\right \}\\ $$
	given by the assignments
	$$\phi \mapsto (\Ucal_\phi,\Vcal_\phi),$$
	where
	$\Ucal_{\phi} = \{X \in \Der(\ModR) \mid \Supp H^n(X) \subseteq \phi(n)~\forall n \in \Z\}$ and $\Vcal_{\phi} = \Ucal_{\phi}^{\Perp{0}}.$
	\end{thm}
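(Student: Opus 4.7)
The plan is to establish the bijection by identifying, for each sp-filtration $\phi$, the subcategory $\Ucal_\phi$ with the aisle generated by the set of compact objects $\Scal_\phi = \{R/I[-n] \mid n \in \Z,\ V(I) \subseteq \phi(n)\}$, and conversely to recover $\phi$ from the supports of cohomologies of compact objects in a given aisle.

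First I would verify directly that $\Ucal_\phi$ is closed under the aisle operations. The closure under suspension is where the monotonicity of $\phi$ enters: $\Supp H^n(X[1]) = \Supp H^{n+1}(X) \subseteq \phi(n+1) \subseteq \phi(n)$; closure under coproducts, extensions, and cones follows from the long exact sequence of cohomology together with the fact that cohomology commutes with coproducts. Since $\Scal_\phi \subseteq \Ucal_\phi$, the aisle $\Ucal$ generated by $\Scal_\phi$ --- which exists by the standard theory of compactly generated t-structures in a well-generated triangulated category --- is contained in $\Ucal_\phi$. For the reverse inclusion $\Ucal_\phi \subseteq \Ucal$, I would apply the truncation triangle $U \to X \to V \to U[1]$ of the t-structure $(\Ucal, \Vcal = \Ucal\Perp{0})$ to an arbitrary $X \in \Ucal_\phi$. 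Since $U \in \Ucal \subseteq \Ucal_\phi$ and $\Ucal_\phi$ is closed under cones, the cone $V$ lies in $\Ucal_\phi \cap \Vcal$, so the problem reduces to the vanishing lemma $\Ucal_\phi \cap \Vcal = 0$.

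For the vanishing lemma, given a hypothetical nonzero $Y \in \Ucal_\phi \cap \Vcal$, I would pick $n$ with $H^n(Y) \neq 0$ and an associated prime $\qq$ of $H^n(Y)$. Then $\qq \in \Supp H^n(Y) \subseteq \phi(n)$, so $V(\qq) \subseteq \phi(n)$ and $R/\qq[-n] \in \Scal_\phi$; the description $\Vcal = \Scal_\phi\Perp{\leq 0}$ forces $\Ext_R^j(R/\qq, Y) = 0$ for all $j \leq n$. A contradiction should come from the hyperext spectral sequence $E_2^{p,q} = \Ext^p(R/\qq, H^q(Y)) \Rightarrow \Ext^{p+q}(R/\qq, Y)$: the corner term $E_2^{0,n} = \Hom(R/\qq, H^n(Y))$ is nonzero because $\qq$ is associated, no differentials can enter this position, and hence a nontrivial contribution must survive in $\Ext^n(R/\qq, Y)$.

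For the inverse assignment I would set $\phi(n) = \bigcup_{C} \Supp H^n(C)$, with $C$ ranging over compact objects in $\Ucal$. Noetherianness guarantees that compact objects in $\Der(\ModR)$ are perfect complexes with finitely generated cohomology, so each $\Supp H^n(C)$ is Zariski closed and the union is specialization closed; monotonicity of $\phi$ follows from the closure of compact objects in $\Ucal$ under $[1]$. That the two assignments are mutually inverse then follows from the identification in the forward direction. The main obstacle in the whole argument is convergence of the hyperext spectral sequence in the vanishing lemma when $Y$ is unbounded below; to handle this I would work with a K-injective replacement of $Y$ (available in the Grothendieck category $\ModR$) and, if necessary, recast the vanishing via local cohomology $\RGamma_{V(\qq)}(Y) \simeq \hocolim_k \RHom(R/\qq^k, Y)$, whose vanishing in degrees $\leq n$ combined with the support conditions on $Y$ should give a directly convergent argument bypassing spectral-sequence issues.
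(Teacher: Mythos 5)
The paper does not actually prove Theorem~\ref{T:AJS}: it is quoted from \cite{AJS} (with Remark~\ref{R:compact} recording the compact generators), so your attempt has to be judged on its own. As written it has two serious gaps. First, the objects $R/I[-n]$ you take as generators are in general \emph{not} compact in $\Der(\ModR)$: the compact objects are exactly the perfect complexes, and $R/I$ is perfect only when it has finite projective dimension (e.g.\ for $R=k[x]/(x^2)$ and $I=(x)$ it is not). So your $\Scal_\phi$ cannot witness compact generation, which is the whole point of the statement; this is precisely why \cite{AJS} and Remark~\ref{R:compact} use the Koszul complexes $K(I)[-n]$, at the price of a further (nontrivial) comparison between the aisle they generate and $\Ucal_\phi$. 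Relatedly, the claim that the two assignments are ``mutually inverse by the forward identification'' is too quick: for an arbitrary compactly generated $(\Ucal,\Vcal)$ with $\phi(n)=\bigcup_C\Supp H^n(C)$, the inclusion $\Ucal\subseteq\Ucal_\phi$ is formal, but $\Ucal_\phi\subseteq\Ucal$ requires showing that $K(\pp)[-n]\in\Ucal$ whenever $\pp\in\Supp H^n(C)$ for some compact $C\in\Ucal$; that is the substance of \cite[Theorem 3.10]{AJS} and does not follow from the forward direction.

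Second, the vanishing lemma $\Ucal_\phi\cap\Vcal=0$, which you correctly identify as the crux, is not established by your spectral sequence argument. It is true that no differentials enter $E_2^{0,n}=\Hom_R(R/\qq,H^n(Y))$, but differentials \emph{leave} it: $d_r\colon E_r^{0,n}\to E_r^{r,\,n-r+1}$, a subquotient of $\Ext^r_R(R/\qq,H^{n-r+1}(Y))$, and nothing in your hypotheses kills these targets (the supports of $H^m(Y)$ for $m<n$ lie in the \emph{larger} sets $\phi(m)\supseteq\phi(n)$), so the corner class need not survive. The argument is easily repaired by taking $n$ \emph{minimal} with $H^n(Y)\neq 0$ --- then one does not even need a spectral sequence, since $\Hom_{\Der(\ModR)}(R/\qq[-n],Y)\simeq\Hom_R(R/\qq,H^n(Y))\neq 0$ --- but such an $n$ exists only when $Y$ is cohomologically bounded below. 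The unbounded-below case is exactly the hard core of the theorem, and your fallback via $\RGamma_{V(\qq)}(Y)\simeq\hocolim_k\RHom_R(R/\qq^k,Y)$ runs into the identical difficulty: the resulting spectral sequence $H^p_{\qq}(H^m(Y))\Rightarrow H^{p+m}(\RGamma_{\qq}(Y))$ has the same uncontrolled outgoing differentials, and a K-injective replacement does not change this. Note that this is the same obstruction the present paper faces for its main theorem, where Lemma~\ref{L06} resolves it only by invoking Foxby--Iyengar \cite[Theorem 2.1]{FI}, the bounded-below case having been known before \cite[Theorem 4.2]{Hcom}. So as it stands your proposal proves the statement only with non-compact ``generators'' and, in effect, only under a bounded-below hypothesis; the genuinely difficult steps of \cite{AJS} are not addressed.
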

	
\begin{remark}\label{R:compact}
For the reader's sake, we provide an explicit description of compact generators of the t-structure $(\Ucal_\phi,\Vcal_\phi)$.
For any ideal $I$ of $R$, let $K(I)$ denote the Koszul complex defined on some fixed finite generating set of $I$. Note that any Koszul complex is a compact object of $\Der(\ModR)$.
Set
$$\Scal_{\phi} = \{K(I)[-n] \mid \text{ $n \in \Z$, $V(I) \subseteq \phi(n)$}\},$$
where $V(I)=\{ \pp\in \Spec (R) \mid I \subseteq \pp \}$. 
It then holds that $\Vcal_{\phi} = \Scal_{\phi}\Perp{0}$
 by \cite[Corollary 3.9]{AJS} and the proof of \cite[Theorem 3.11]{AJS}. In fact, $\Ucal_\phi=\Perp{0}(\Scal_{\phi}\Perp{0})$ is the smallest aisle containing $\Scal_{\phi}$, see \cite[1.1 and 1.2]{AJS}.
We remark that this description also allows for classification of compactly generated t-structure over commutative rings which are not necessarily noetherian, see \cite[\S 5]{Hcom}.
\end{remark}

The next corollary is a key to our aim.
Note that a stalk complex means a complex concentrated in degree zero.

\begin{cor}\label{C01}
	Let $R$ be a commutative noetherian ring and $(\Ucal,\Vcal)$ a t-structure in $\Der(\ModR)$. 
	The t-structure is compactly generated if and only if $\Ucal = \Perp{0} \Ecal$ for some set $\Ecal$ of shifts of stalk complexes of injective $R$-modules.
\end{cor}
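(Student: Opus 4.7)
My plan rests on two classical identifications. First, for any injective $R$-module $E$ viewed as a stalk complex in degree zero, $\Hom_{\Der(\ModR)}(X, E[-n]) \cong \Hom_R(H^n(X), E)$, because $E$ is an injective object of $\Ch(\ModR)$. Second, by properties of the indecomposable injective $E(R/\pp)$ (a consequence of Matlis duality), $H^n(X)_\pp = 0$ if and only if $\Hom_R(H^n(X), E(R/\pp)) = 0$. Combining them, $\Hom_{\Der(\ModR)}(X, E(R/\pp)[-n]) = 0$ precisely when $\pp \notin \Supp H^n(X)$.

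For the forward implication, I assume $(\Ucal,\Vcal)$ is compactly generated, so Theorem~\ref{T:AJS} yields $\Ucal = \Ucal_\phi$ for some sp-filtration $\phi$. Set $\Ecal := \{E(R/\pp)[-n] \mid n \in \Z,\ \pp \in \Spec(R)\setminus\phi(n)\}$. By the identifications above, $\Perp{0}\Ecal = \{X \mid \Supp H^n(X) \subseteq \phi(n)\ \forall n\} = \Ucal_\phi$, as required.

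For the backward implication, given $\Ucal = \Perp{0}\Ecal$, I first reduce to the case where the complexes in $\Ecal$ are shifts of indecomposable injectives. Since $R$ is noetherian, Bass's theorem splits every injective as $\bigoplus_\alpha E(R/\pp_\alpha)$, and $\Hom_R(M,\bigoplus_\alpha E_\alpha) = 0$ iff $\Hom_R(M, E_\alpha) = 0$ for each $\alpha$; thus one can assume $\Ecal = \{E(R/\pp)[-n] \mid (\pp,n) \in \Sigma\}$ for some $\Sigma \subseteq \Spec(R) \times \Z$. Setting $S(n) := \{\pp \mid (\pp,n) \in \Sigma\}$, the identifications give $\Ucal = \{X \mid \Supp H^n(X) \cap S(n) = \emptyset\ \forall n\}$. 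I then define intrinsically $\phi(n) := \bigcup_{Y \in \Ucal} \Supp H^n(Y)$. Each $\phi(n)$ is specialization-closed as a union of supports, and $\phi$ is decreasing because the aisle $\Ucal$ is closed under $[1]$: every $Y \in \Ucal$ gives $Y[1] \in \Ucal$, whence $\Supp H^{n+1}(Y) = \Supp H^n(Y[1]) \subseteq \phi(n)$, so $\phi(n+1) \subseteq \phi(n)$. Thus $\phi$ is an sp-filtration. By construction $\phi(n) \cap S(n) = \emptyset$, so any $X$ with $\Supp H^n(X) \subseteq \phi(n)$ automatically satisfies the $S(n)$-disjointness condition and hence lies in $\Ucal$; this gives $\Ucal_\phi \subseteq \Ucal$, while $\Ucal \subseteq \Ucal_\phi$ is immediate from the definition of $\phi$. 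Applying Theorem~\ref{T:AJS} to $\phi$ finishes the proof.

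The only subtle step is the backward direction: knowing that $\Ucal$ is left-orthogonal to a raw set of injective-stalk shifts is not by itself enough to read off an sp-filtration. The trick is to extract $\phi$ intrinsically from the cohomology supports of all objects of $\Ucal$, using the aisle axiom (closure under $[1]$) to guarantee that $\phi$ is decreasing.
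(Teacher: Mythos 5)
Your proof is correct, and its overall strategy is the same as the paper's: the forward direction is identical (both use that $\Hom_{\Der(\ModR)}(X,E(R/\pp)[-n])\cong\Hom_R(H^n(X),E(R/\pp))$ vanishes exactly when $\pp\notin\Supp H^n(X)$, i.e. the paper's equivalence \eqref{star}, and take $\Ecal=\{E(R/\pp)[-n]\mid \pp\notin\phi(n)\}$), and in the backward direction both you and the paper convert $\Ucal=\Perp{0}\Ecal$ into a cohomology-wise support condition and then apply Theorem~\ref{T:AJS}. The difference is in how the specialization closed sets are produced: the paper notes that each $\Ucal_n=\{M\in\ModR\mid\Hom_R(M,E)=0\ \forall E\in\Ecal_n\}$ is a hereditary torsion class and cites the classification of hereditary torsion classes over a commutative noetherian ring (\cite[Proposition 2.3]{APST}) to get $U_n$, whereas you first split the injectives into indecomposables via Matlis' theorem and then define $\phi(n)=\bigcup_{Y\in\Ucal}\Supp H^n(Y)$ intrinsically, verifying by hand that $\phi$ is an sp-filtration (supports are specialization closed; monotonicity follows from closure of $\Ucal$ under $[1]$) and that $\Ucal=\Ucal_\phi$ using $\phi(n)\cap S(n)=\emptyset$. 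Your route is slightly more self-contained, avoiding the torsion-theoretic classification at the cost of the Matlis decomposition and the intrinsic construction; the paper's is shorter given that citation. One small correction of wording: the stalk complex $E$ is not an injective object of $\Ch(\ModR)$ (injective objects of the category of complexes are contractible complexes of injectives); the correct justification for $\Hom_{\Der(\ModR)}(X,E[-n])\cong\Hom_R(H^n(X),E)$ is that a bounded complex of injectives is K-injective, so the derived Hom is computed by the Hom complex, and exactness of $\Hom_R(-,E)$ then yields the formula.
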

\begin{proof}
	If $(\Ucal,\Vcal)$ is compactly generated, then there is by Theorem~\ref{T:AJS} an sp-filtration $\phi$ on $\Spec(R)$ such that 
$$\Ucal = \{X \in \Der(\ModR) \mid \Supp H^n(X) \subseteq \phi(n) ~\forall n \in \Z\}.$$
 Then $\Ucal = \Perp{0}\Ecal$ for $\Ecal = \{E(R/\pp)[-n] \mid n \in \Z, \pp \not\in \phi(n)\}$. In fact, one can easily deduce this fact by noting that 
\begin{align}\label{star}\tag{$*$}
\Hom_{\Der(\ModR)}(X, E(R/\pp)[-n])=0\Longleftrightarrow \pp\notin \Supp H^n(X),
\end{align} see \cite[Theorems 18.4 and 18.6]{Matsu} and \cite[Theorem 13.4.1]{KaSc}. See also \cite[Lemma 3.2]{Hcom}.
	
Conversely, let $\Ecal_n$ be a set of injective $R$-modules for each $n \in \Z$ and write $\Ecal = \bigcup_{n \in \Z}\Ecal_n[-n]$. We assume that $\Ucal = \Perp{0}\Ecal$.
It is not hard to see that 
$$\Ucal = \{X \in \Der(\ModR) \mid H^n(X) \in \Ucal_n ~\forall n \in \Z\},$$ where 
$\Ucal_n = \{M \in \ModR \mid \Hom_R(M,E) = 0 ~\forall E \in \Ecal_n, ~\forall n \in \Z\}$. 
But then $\Ucal_n$ is a hereditary torsion class in $\ModR$, and hence there (uniquely) exists a specialization closed subset $U_ n$ such that $\Ucal_n = \{M \in \ModR \mid \Supp M \subseteq U_n\}$, see e.g. \cite[Proposition 2.3]{APST}. 
Since $\Ucal$ is closed under suspensions, we see that $\Ucal_{n+1}\subseteq \Ucal_{n}$, and this implies $U_{n+1} \subseteq U_{n}$. 
Then $\Ucal$ is given by an sp-filtration $\phi$ defined by $\phi(n) = U_n$ for all $n \in \Z$. In fact, it follows that 
$$\Ucal = \{X \in \Der(\ModR) \mid \Supp H^n(X) \subseteq U_n ~\forall n \in \Z\}.$$
Hence the t-structure $(\Ucal, \Vcal)$ is compactly generated by Theorem~\ref{T:AJS}.
\end{proof}

 In terms of the above fact, essential tasks are to give such a set $\Ecal$ and to show $\Ucal = \Perp{0} \Ecal$, for a given homotopically smashing $t$-structure $(\Ucal, \Vcal)$.

%%%%%%%%%%%Milnor (co)limits%%%%%%%%%%%

We next recall the notion of Milnor (co)limits. 
Consider a sequence of morphisms in $\Der(\ModR)$;
$$X_0 \xrightarrow{f_0} X_1 \xrightarrow{f_1} X_2 \xrightarrow{f_2} X_3 \xrightarrow{f_3} \cdots.$$
The \EMP{Milnor colimit} $\Mcolim_{n \geq 0} X_n$ of this sequence is the cone of the morphism $\bigoplus_{n \geq 0} X_n \xrightarrow{1-\sigma} \bigoplus_{n \geq 0} X_n$, where $\sigma$ is a morphism with components $X_k \xrightarrow{f_k} X_{k+1}$. 
Similarly, for a sequence of morphisms 
$$
X_0 \xleftarrow{f_0} X_1 \xleftarrow{f_1} X_2 \xleftarrow{f_2} X_3 \xleftarrow{f_3} \cdots,
$$
the \EMP{Milnor limit} $\Mlim_{n \geq 0}X_n$ is the cocone of the morphism $\prod_{n \geq 0} X_n \xrightarrow{1-\lambda} \prod_{n \geq 0} X_n$, 
where $\lambda$ has components $X_k \xrightarrow{f_k} X_{k-1}$ for $k > 0$ and a zero morphism $X_0 \rightarrow 0$.

What we need to observe for our purpose is that any aisle is closed under Milnor colimits and any coaisle is closed under Milnor limits by their definitions. See also \ref{hst}.
The relationship between Milnor (co)limits and homotopy (co)limits is explained in the following remark.

\begin{remark}\label{relationship}
In the literature, Milnor (co)limits have been also called homotopy (co)limits, see \cite{N3} for example. Here, we follow the custom of \cite{KN} where Milnor (co)limits are distinguished from homotopy (co)limits defined by derivators; unlike the latter, the former do not have functorial properties. 
However, as explained in \cite[Appendix 2]{KN}, Milnor colimits can be realized as homotopy colimits of directed systems indexed by $\mathbb{N}$, and vice versa. Since the standard derivator $\DD_{\ModR}$ is strong and stable (see Example \ref{example derivator}), the dual statement holds true as well by \cite[Proposition 6.27, Lemma 9.3 and Example 9.41(iii)]{Gro};  Milnor limits can be realized as homotopy limits of inverse systems indexed by $\mathbb{N}$, and vice versa.
\end{remark}

If $X$ is a cochain complex, we let $\sigma^{\geq n}(X)$ and $\sigma^{\leq n}(X)$ denote the right and left brutal truncations of $X$ in degree $n \in \Z$. We recall that there are canonical inductive and inverse systems in $\Ch(\ModR)$:
$$\sigma^{\geq 0}(X) \rightarrow \sigma^{\geq -1}(X) \rightarrow \sigma^{\geq -2}(X) \rightarrow \sigma^{\geq -3}(X) \rightarrow \cdots$$
and
$$\sigma^{\leq 0}(X) \leftarrow \sigma^{\leq 1}(X) \leftarrow \sigma^{\leq 2}(X) \leftarrow \sigma^{\leq 3}(X) \leftarrow \cdots,$$
where $X$ coincides with both their inductive limit and inverse limit.
Further, if we regard these systems as sequences in $\Der(\ModR)$, then $X$ is isomorphic to both $\Mcolim_{n \geq 0}\sigma^{\geq -n}(X)$ and to $\Mlim_{n \geq 0}\sigma^{\leq n}(X)$ in $\Der(\ModR)$, see \cite[Remarks 2.2 and 2.3]{N3} or \cite[Lemma 5.3 and Theorem A.2]{KN}. 
%%%%%%%%%%%Milnor (co)limits%%%%%%%%%%%

For $\pp\in \Spec (R)$, we denote by $\kappa(\pp)$ the residue field $R_\pp/ \pp R_\pp$ of $R_\pp$.
 
\begin{lem}\label{L00}
	Let $R$ be a commutative ring and $(\Ucal,\Vcal)$ a t-structure in $\Der(\ModR)$. 
If $\pp \in \Spec(R)$ and $n \in \Z$ satisfy $\Hom_{\Der(\ModR)}(\kappa(\pp)[-n],Y ) \neq 0$ for some $Y \in \Vcal$, then $\kappa(\pp)[-n] \in \Vcal$.
\end{lem}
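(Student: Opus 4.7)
The plan is to realize $\kappa(\pp)[-n]$ as a direct summand of $\RHom_R(\kappa(\pp), Y)$ and to show this larger object already lies in $\Vcal$. This sidesteps the t-structure truncation triangle for $\kappa(\pp)[-n]$ altogether, instead exploiting the semisimplicity of $\Der(\kappa(\pp))$ together with the closure properties of the coaisle.

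First I would verify that $\RHom_R(\kappa(\pp), Y) \in \Vcal$. Choosing a projective resolution $P_\bullet \to \kappa(\pp)$ with each $P_k$ a free $R$-module on an index set $I_k$, one has $\RHom_R(\kappa(\pp), Y) \simeq \Hom_R(P_\bullet, Y)$, a cochain complex whose $k$-th term is the product $Y^{I_k}$, which lies in $\Vcal$ by closure under products. A short induction on $n$ using the level-wise split short exact sequence of complexes
\begin{equation*}
0 \to Y^{I_n}[-n] \to \sigma^{\leq n}\Hom_R(P_\bullet, Y) \to \sigma^{\leq n-1}\Hom_R(P_\bullet, Y) \to 0
\end{equation*}
together with closure of $\Vcal$ under cosuspensions and extensions shows that each brutal truncation $\sigma^{\leq n}\Hom_R(P_\bullet, Y)$ lies in $\Vcal$. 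As recalled in the excerpt, $\Hom_R(P_\bullet, Y)$ is isomorphic to the Milnor limit of these truncations in $\Der(\ModR)$; since coaisles are closed under Milnor limits, this gives $\RHom_R(\kappa(\pp), Y) \in \Vcal$.

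Next, since $\operatorname{End}_R(\kappa(\pp)) = \kappa(\pp)$ acts on $\RHom_R(\kappa(\pp), Y)$, the latter is naturally a complex of $\kappa(\pp)$-vector spaces. As $\Der(\kappa(\pp))$ is semisimple, this yields a decomposition in $\Der(\ModR)$
\begin{equation*}
\RHom_R(\kappa(\pp), Y) \;\cong\; \bigoplus_{m \in \Z} \Ext^m_R(\kappa(\pp), Y)[-m].
\end{equation*}
By hypothesis, $\Ext^n_R(\kappa(\pp), Y) = \Hom_{\Der(\ModR)}(\kappa(\pp)[-n], Y)$ is a nonzero $\kappa(\pp)$-vector space, so $\kappa(\pp)$ splits off from it as an $R$-module direct summand. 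Consequently $\kappa(\pp)[-n]$ is a direct summand of $\RHom_R(\kappa(\pp), Y) \in \Vcal$, and closure of $\Vcal$ under direct summands yields $\kappa(\pp)[-n] \in \Vcal$.

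The main obstacle is the first step: identifying a presentation of $\RHom_R(\kappa(\pp), Y)$ that is manifestly built from $Y$ by operations preserved by any coaisle. Handling the possibly infinite projective dimension of $\kappa(\pp)$ forces the use of Milnor limits, and this is precisely where the closure of coaisles under Milnor limits (recalled in the paper) becomes essential. The remaining steps are clean consequences of the field structure of $\kappa(\pp)$.
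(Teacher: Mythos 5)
Your proposal is correct and takes essentially the same route as the paper: there too one sets $X=\RHom_R(\kappa(\pp),Y)$, uses that $X$ is (an object underlying) a complex of $\kappa(\pp)$-vector spaces to split it as $\bigoplus_{i}H^i(X)[-i]$, observes $H^n(X)\neq 0$, and extracts $\kappa(\pp)[-n]$ as a direct summand of an object of $\Vcal$. The only difference is that the paper cites \cite[Proposition 2.2(ii)]{Hcom} for the membership $\RHom_R(\kappa(\pp),Y)\in\Vcal$, while you prove it directly from a free resolution $P_\bullet$ via products, cosuspensions, extensions and Milnor limits --- a sound in-line argument, provided your $\sigma^{\leq n}\Hom_R(P_\bullet,Y)$ is read as $\Hom_R(\sigma^{\geq -n}P_\bullet,Y)$ (the truncation in the resolution variable), whose tower of degreewise split surjections indeed realizes $\Hom_R(P_\bullet,Y)$ as the Milnor limit.
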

\begin{proof}
	Put $X = \RHom_R(\kappa(\pp),Y)$, then $X$ is isomorphic to a complex of vector spaces over the residue field $\kappa(\pp)$ in $\Der(\ModR)$. In particular, $X$ is isomorphic to $\bigoplus_{i \in \Z}H^i(X)[-i]$ in $\Der(\ModR)$. Using the assumption, 
$$H^n(X) = H^n \RHom{}_R(\kappa(\pp),Y) \simeq \Hom{}_{\Der(\ModR)}(\kappa(\pp)[-n],Y) \neq 0.$$ 
Now, it follows from \cite[Proposition 2.2(ii)]{Hcom} that the complex $X$ belongs to $\Vcal$ because $Y\in \Vcal$.
Hence, its direct summand $H^n(X)[-n]$ also belongs to $\Vcal$. As $H^n(X)$ is a non-zero vector space over $\kappa(\pp)$, we conclude that $\kappa(\pp)[-n] \in \Vcal$.
\end{proof}

\begin{cor}\label{L01}
Let $R$ be a commutative ring and $(\Ucal,\Vcal)$ a t-structure in $\Der(\ModR)$.
For any $n \in \Z$ and any $\pp \in \Spec(R)$, either $\kappa(\pp)[-n] \in \Ucal$ or $\kappa(\pp)[-n] \in \Vcal$.
\end{cor}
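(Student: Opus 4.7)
The plan is to obtain the dichotomy by a simple case split on whether the functor $\Hom_{\Der(\ModR)}(\kappa(\pp)[-n],-)$ vanishes identically on $\Vcal$. The point is that Lemma~\ref{L00} is already doing all the real work: it says that the mere existence of a single nonzero morphism from $\kappa(\pp)[-n]$ into some coaisle object is enough to place $\kappa(\pp)[-n]$ itself in $\Vcal$. So the strategy is to reduce the corollary to an application of that lemma in one case, and to the general yoga of t-structures in the other.

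Concretely, I would fix $\pp\in\Spec(R)$ and $n\in\Z$ and distinguish two cases. If there is some $Y\in\Vcal$ with $\Hom_{\Der(\ModR)}(\kappa(\pp)[-n],Y)\neq 0$, then Lemma~\ref{L00} directly gives $\kappa(\pp)[-n]\in\Vcal$, and we are done.

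In the remaining case, $\Hom_{\Der(\ModR)}(\kappa(\pp)[-n],V)=0$ for every $V\in\Vcal$, and I would derive $\kappa(\pp)[-n]\in\Ucal$ from the standard identity $\Ucal={}^{\perp_0}\Vcal$. To make this self-contained, apply axiom (ii) to obtain an approximation triangle $U\to \kappa(\pp)[-n]\to V\to U[1]$ with $U\in\Ucal$ and $V\in\Vcal$. By hypothesis the middle map is zero, so the triangle splits and $\kappa(\pp)[-n]\cong U\oplus V[-1]$. Since coaisles are closed under $[-1]$ (as noted just before Theorem~\ref{T:AJS}), $V[-1]\in\Vcal$; but then the projection onto the summand $V[-1]$ is a morphism in $\Hom_{\Der(\ModR)}(\kappa(\pp)[-n],V[-1])$, which must vanish by our hypothesis. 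This forces $V[-1]=0$, hence $V=0$ and $\kappa(\pp)[-n]\cong U\in\Ucal$.

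There is no real obstacle here: the substantive input (the passage from a nonzero Hom into $\Vcal$ to membership in $\Vcal$) has been isolated into Lemma~\ref{L00}, and what remains is only the formal observation $\Ucal={}^{\perp_0}\Vcal$. If one wishes to avoid even invoking the splitting argument, one can just cite this identity as folklore for any t-structure.
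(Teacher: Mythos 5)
Your strategy is essentially the paper's proof read contrapositively: the paper assumes $\kappa(\pp)[-n]\notin\Ucal$, observes that the map $f$ in the approximation triangle $X\to\kappa(\pp)[-n]\xrightarrow{f}Y\to X[1]$ must then be nonzero (otherwise $\kappa(\pp)[-n]$ would be a direct summand of $X\in\Ucal$), and invokes Lemma~\ref{L00}; your case split uses exactly the same two ingredients, namely Lemma~\ref{L00} and the identity $\Ucal=\Perp{0}\Vcal$, the latter being proved by the same splitting trick. There is, however, one slip in your second case: in a triangle $U\to B\xrightarrow{\,0\,}V\to U[1]$ with vanishing middle map, applying $\Hom_{\Der(\ModR)}(B,-)$ shows that $\mathrm{id}_B$ factors through $U\to B$, so it is $U$ that decomposes, $U\cong B\oplus V[-1]$; your claimed isomorphism $\kappa(\pp)[-n]\cong U\oplus V[-1]$ is backwards, and hence the ``projection onto the summand $V[-1]$'' of $\kappa(\pp)[-n]$ does not exist as described. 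The repair is immediate and is precisely the paper's own remark: $\kappa(\pp)[-n]$ is a direct summand of $U\in\Ucal$, and aisles are closed under direct summands, so $\kappa(\pp)[-n]\in\Ucal$ (one need not show $V=0$; if desired, note that $V[-1]$ is a summand of $U$, so $V[-1]\in\Ucal\cap\Vcal$, forcing $\mathrm{id}_{V[-1]}=0$). With that correction, or simply by citing $\Ucal=\Perp{0}\Vcal$ as you suggest, your argument is correct and does not differ in substance from the paper's.
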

\begin{proof}
	Suppose that $\kappa(\pp)[-n] \not\in \Ucal$, and consider the approximation triangle 
$$X \rightarrow \kappa(\pp)[-n] \xrightarrow{f} Y \rightarrow X[1]$$
with respect to the t-structure $(\Ucal,\Vcal)$. Since $\kappa(\pp)[-n] \not\in \Ucal$ the map $f$ is non-zero in $\Der(\ModR)$, as otherwise $\kappa(\pp)[-n]$ would be a direct summand of $X \in \Ucal$. Therefore, $\Hom_{\Der(\ModR)}(\kappa(\pp)[-n],Y) \neq 0$, and thus $\kappa(\pp)[-n] \in \Vcal$ by Lemma~\ref{L00}.
\end{proof}

\emph{From now on, let $(\Ucal,\Vcal)$ be a homotopically smashing t-structure in the derived category $\Der(\ModR)$ of a commutative noetherian ring $R$. 
Hence the coaisle $\Vcal$ is closed under both homotopy limits and directed homotopy colimits.}

\begin{lem}\label{L02}
	For any $n \in \Z$ and any $\pp \in \Spec(R)$, we have  
	$$\kappa(\pp)[-n] \in \Vcal \iff E(R/\pp)[-n] \in \Vcal.$$
\end{lem}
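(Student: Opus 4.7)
The plan is to prove the two directions separately, using the socle of $E(R/\pp)$ for the easy direction ($\Leftarrow$) and the $\pp$-adic filtration of $E(R/\pp)$ for the direction ($\Rightarrow$).

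Before starting, I would record a useful preliminary observation: the coaisle $\Vcal$ of a homotopically smashing t-structure is closed under arbitrary coproducts in $\Der(\ModR)$. Indeed, for any family $\{Y_i\}_{i \in I}$ in $\Vcal$, the coproduct $\bigoplus_{i \in I} Y_i$ equals the direct limit in $\Ch(\ModR)$ of its finite subsums $\bigoplus_{i \in F} Y_i$, and each such finite subsum coincides with the finite product of the $Y_i$'s (which is in $\Vcal$ since coaisles are closed under products). By the homotopically smashing assumption (Fact~\ref{fact}), the direct limit lies in $\Vcal$.

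For the direction ($\Leftarrow$), assume $E(R/\pp)[-n] \in \Vcal$. The socle $\Soc E(R/\pp)$ is isomorphic to $\kappa(\pp)$, so there is a nonzero $R$-homomorphism $\kappa(\pp) \to E(R/\pp)$, giving
\[
\Hom_{\Der(\ModR)}(\kappa(\pp)[-n],E(R/\pp)[-n]) \cong \Hom_R(\kappa(\pp),E(R/\pp)) \neq 0.
\]
Lemma~\ref{L00} applied to $Y = E(R/\pp)[-n]$ then yields $\kappa(\pp)[-n] \in \Vcal$.

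For the direction ($\Rightarrow$), assume $\kappa(\pp)[-n] \in \Vcal$. Since $R$ is noetherian, the module $E(R/\pp)$ carries the $\pp$-adic filtration $M_k := (0 :_{E(R/\pp)} \pp^k)$ with $M_0 = 0$, $E(R/\pp) = \bigcup_{k \geq 0} M_k$, and each successive quotient $M_{k+1}/M_k$ annihilated by $\pp$, hence a $\kappa(\pp)$-vector space, hence a direct sum of copies of $\kappa(\pp)$ as an $R$-module. Using the preliminary observation, any such direct sum, shifted by $[-n]$, lies in $\Vcal$. By induction on $k$, the short exact sequence $0 \to M_k \to M_{k+1} \to M_{k+1}/M_k \to 0$ induces a triangle in $\Der(\ModR)$ whose outer terms $M_k[-n]$ and $(M_{k+1}/M_k)[-n]$ are in $\Vcal$; since coaisles are closed under extensions, $M_{k+1}[-n] \in \Vcal$. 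Finally, $E(R/\pp)[-n]$ is the direct limit of the system $\{M_k[-n]\}_{k \geq 0}$ in $\Ch(\ModR)$, so Fact~\ref{fact} delivers $E(R/\pp)[-n] \in \Vcal$.

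No step here looks genuinely difficult; the minor points to check carefully are that $\Vcal$ really absorbs arbitrary coproducts (which is where the homotopically smashing assumption is used in an essential way, not just closure under products), and that the $\pp$-adic filtration exhausts $E(R/\pp)$ — a property that relies on $R$ being noetherian and is the point where the noetherian hypothesis enters the lemma.
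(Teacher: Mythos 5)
Your proof is correct and follows essentially the same route as the paper: the paper's forward direction invokes Matlis's theorem (\cite[Theorem 3.4]{Ma}) for the filtration of $E(R/\pp)$ by coproducts of copies of $\kappa(\pp)$ --- exactly the socle filtration $M_k=(0:_{E(R/\pp)}\pp^k)$ you construct --- together with closure of $\Vcal$ under extensions and directed homotopy colimits, and the converse uses $\Hom_R(\kappa(\pp),E(R/\pp))\neq 0$ plus Lemma~\ref{L00}, just as you do. Your only deviations are expository (spelling out the coproduct closure of $\Vcal$ and the induction along the filtration), and the single nitpick that $\Soc_R E(R/\pp)$ is literally $\kappa(\pp)$ only when $\pp$ is maximal; what you actually use, namely $(0:_{E(R/\pp)}\pp)\cong\kappa(\pp)$, is correct.
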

\begin{proof}
We first remark that $E(R/\pp)$ admits a filtration by coproducts of copies of $\kappa(\pp)$, see \cite[Theorem 3.4]{Ma}. Since $\Vcal$ is closed under extensions and directed homotopy colimits, we see that $\kappa(\pp)[-n] \in \Vcal$ implies $E(R/\pp)[-n] \in \Vcal$. 

For the other implication, note that
$$\Hom_{\Der(\ModR)}(\kappa(\pp)[-n],E(R/\pp)[-n])\simeq \Hom_{R}(\kappa(\pp),E(R/\pp))\neq 0$$
see \cite[Theorem 18.4]{Matsu}.
Thus $E(R/\pp)[-n]\in \Vcal$
implies $\kappa(\pp)[-n]\in \Vcal$ by Lemma \ref{L00}.
\end{proof}

\begin{lem}\label{L03}
	Let $\qq \subseteq \pp$ be prime ideals of $R$ and let $n \in \Z$. Then $\kappa(\pp)[-n] \in \Vcal$ implies $\kappa(\qq)[-n] \in \Vcal$.
\end{lem}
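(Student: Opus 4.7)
My plan is to invoke Lemma~\ref{L00}: it suffices to produce some $Y \in \Vcal$ admitting a non-zero morphism $\kappa(\qq)[-n] \to Y$ in $\Der(\ModR)$. I will construct such a $Y$ starting from $\kappa(\pp)[-n] \in \Vcal$ by iterated application of the closure properties of the coaisle of a homotopically smashing t-structure, namely closure under extensions, products, cosuspensions, Milnor limits, and directed homotopy colimits. The auxiliary ring I want to use is $A = R_\pp/\qq R_\pp$, a Noetherian local domain with maximal ideal $\mm = \pp A$, residue field $\kappa(\pp)$, and field of fractions $\kappa(\qq)$.

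First, each quotient $A/\mm^i$ is an Artinian local ring, hence admits a finite composition series with composition factors $\kappa(\pp)$; consequently $(A/\mm^i)[-n]\in\Vcal$ by closure under extensions. Second, the tower $\{A/\mm^i\}_i$ has surjective transition maps, so it satisfies the Mittag--Leffler condition, and its Milnor limit in $\Der(\ModR)$ is the stalk complex $\hat A$ in degree zero (where $\hat A$ is the $\mm$-adic completion); since $\Vcal$ is closed under Milnor limits, $\hat A[-n]\in\Vcal$. Third, for each nonzero $s\in A$ the localisation $\hat A[s^{-1}]$ is the direct limit in $\Ch(\ModR)$ of the sequence $\hat A\xrightarrow{s}\hat A\xrightarrow{s}\cdots$, and the filtered colimit of these over $s\in A\setminus 0$ ordered by divisibility is the $R$-module $\hat A\otimes_A\kappa(\qq)$; closure under directed homotopy colimits gives $(\hat A\otimes_A\kappa(\qq))[-n]\in\Vcal$.

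Finally, Krull's intersection theorem yields $\bigcap_i\mm^i=0$, hence an injection $A\hookrightarrow\hat A$, and flatness of the localisation $\kappa(\qq)=\operatorname{Frac}(A)$ over $A$ provides an injection $\kappa(\qq)\hookrightarrow\hat A\otimes_A\kappa(\qq)$ of $R$-modules. The induced morphism $\kappa(\qq)[-n]\to(\hat A\otimes_A\kappa(\qq))[-n]$ in $\Der(\ModR)$ is non-zero, and then Lemma~\ref{L00} concludes that $\kappa(\qq)[-n]\in\Vcal$. The main subtlety I anticipate is that the Milnor-limit step produces the completion $\hat A$ rather than $A$ itself, so one cannot directly extract $A[-n]\in\Vcal$; the workaround is precisely that $\kappa(\qq)$ still embeds into the localisation $\hat A\otimes_A\kappa(\qq)$, which renders the passage from $\hat A$ back to $\operatorname{Frac}(A)$ unnecessary.
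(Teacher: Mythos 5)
Your proof is correct, and it takes a genuinely different route from the paper's. The paper gets $E(R/\pp)[-n]\in\Vcal$ from Lemma~\ref{L02} (Matlis' filtration of $E(R/\pp)$ by copies of $\kappa(\pp)$, plus closure of $\Vcal$ under extensions and directed homotopy colimits), and then uses injectivity of $E(R/\pp)$ to extend the nonzero composite $R/\qq \twoheadrightarrow R/\pp \hookrightarrow E(R/\pp)$ along the essential embedding $R/\qq\subseteq\kappa(\qq)$, so that Lemma~\ref{L00} applies with $Y=E(R/\pp)[-n]$. You instead manufacture a different object of $\Vcal$ receiving a nonzero map from $\kappa(\qq)[-n]$, namely the generic fibre $\hat{A}\otimes_A\kappa(\qq)$ of the completion of $A=R_\pp/\qq R_\pp$: extensions give $(A/\mm^i)[-n]\in\Vcal$, Milnor limits (legitimate, since $\Vcal$ is closed under products, cosuspensions and extensions, and $\varprojlim^1$ vanishes by Mittag--Leffler) give $\hat{A}[-n]\in\Vcal$, and directed homotopy colimits in the sense of Fact~\ref{fact} give $(\hat{A}\otimes_A\kappa(\qq))[-n]\in\Vcal$; Krull's intersection theorem plus flatness of $\operatorname{Frac}(A)$ supply the required nonzero (indeed injective) map $\kappa(\qq)\to\hat{A}\otimes_A\kappa(\qq)$, and Lemma~\ref{L00} finishes as in the paper. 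Both arguments funnel through Lemma~\ref{L00}; the paper's is shorter because Lemma~\ref{L02} (needed elsewhere anyway) already encodes the homotopically smashing input, and it only uses closure under extensions and directed homotopy colimits together with Matlis theory, whereas your argument bypasses injective hulls and Lemma~\ref{L02} entirely at the cost of heavier closure properties (products/Milnor limits) and more commutative algebra (Artinian filtrations, completion, Krull intersection). Your self-identified subtlety — that the Milnor limit yields $\hat{A}$ rather than $A$ — is real, and your workaround via the embedding $\kappa(\qq)\hookrightarrow\hat{A}\otimes_A\kappa(\qq)$ is exactly what makes the argument go through.
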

\begin{proof}
	Consider a commutative diagram
$$
	\begin{CD}
		R/\qq @>>> R/\pp \\
		@V\subseteq VV @VV\subseteq V \\
		\kappa(\qq) @>>> E(R/\pp) 
	\end{CD}
$$
where the top map is the canonical surjection, the vertical maps are the essential inclusions, and the bottom map is obtained from the injectivity of $E(R/\pp)$. In particular, the bottom map is non-zero, and thus $\Hom_{\Der(\ModR)}(\kappa(\qq)[-n],E(R/\pp)[-n]) \neq 0$. As $\kappa(\pp)[-n] \in \Vcal$, we have $E(R/\pp)[-n] \in \Vcal$ by Lemma~\ref{L02}, and therefore $\kappa(\qq)[-n] \in \Vcal$ by Lemma~\ref{L00}.
\end{proof}

We define a set of shifts of stalk complexes of injective modules as follows: 
$$\Ecal = \{E(R/\pp)[-n] \mid n\in \mathbb{Z}, \pp\in \Spec (R), E(R/\pp)[-n]\in \Vcal\}.$$
The goal is to show that $\Ucal=\Perp{0}\Ecal$, as then $(\Ucal,\Vcal)$ is compactly generated by Corollary~\ref{C01}.

Let us define $\Wcal$ to be the smallest subcategory of $\Der(\ModR)$ satisfying the following three properties:
	\begin{enumerate}
		\item[(i)] $\Ecal \subseteq \Wcal$;
		\item[(ii)] $\Wcal$ is closed under extensions, cosuspensions and arbitrary products;
		\item[(iii)] $\Wcal$ is closed under directed homotopy colimits.
	\end{enumerate}
Compare the conditions (i) and (ii) with \cite[1.2]{AJS} or the last paragraph of \cite[\S 3.1]{A}.
The condition (iii) makes sense as we are interested in homotopically smashing t-structures.
Note that (ii) implies that $\Wcal$ is closed under Milnor limits.
In terms of Remark \ref{relationship} (or Fact \ref{fact}), (iii) implies that $\Wcal$ is closed under Milnor colimits.

\begin{lem}\label{L04}
	If $\Vcal \subseteq \Wcal$, then the t-structure $(\Ucal,\Vcal)$ is compactly generated.
\end{lem}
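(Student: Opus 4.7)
The plan is to extract an sp-filtration $\phi$ directly from $\Vcal$ and then identify $(\Ucal,\Vcal)$ with the compactly generated t-structure associated to $\phi$ via Theorem~\ref{T:AJS}. Concretely, define $\phi:\Z\to 2^{\Spec(R)}$ by
$$\phi(n)=\{\pp\in\Spec(R):E(R/\pp)[-n]\notin\Vcal\}.$$
First I would check that $\phi$ is an sp-filtration. The decreasing condition $\phi(n+1)\subseteq\phi(n)$ is immediate from the closure of $\Vcal$ under $[-1]$. For specialization-closedness of $\phi(n)$, the contrapositive is: if $\pp\subseteq\qq$ and $E(R/\qq)[-n]\in\Vcal$, then $E(R/\pp)[-n]\in\Vcal$. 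This follows by successively applying Lemma~\ref{L02}, Lemma~\ref{L03}, and Lemma~\ref{L02} again.

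Let $(\Ucal_\phi,\Vcal_\phi)$ be the compactly generated t-structure associated to $\phi$ via Theorem~\ref{T:AJS}. The converse part of the proof of Corollary~\ref{C01} gives $\Ucal_\phi=\Perp{0}\Ecal$ (with the same $\Ecal$ as above, since $\pp\notin\phi(n)$ iff $E(R/\pp)[-n]\in\Vcal$). Because $\Ecal\subseteq\Vcal$ by construction, $\Ucal=\Perp{0}\Vcal\subseteq\Perp{0}\Ecal=\Ucal_\phi$, and hence $\Vcal_\phi\subseteq\Vcal$. For the reverse inclusion $\Vcal\subseteq\Vcal_\phi$ I would invoke the hypothesis $\Vcal\subseteq\Wcal$ and the minimality of $\Wcal$: it is enough to show $\Vcal_\phi$ satisfies the three defining properties of $\Wcal$. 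Property (i) is immediate from $\Ecal\subseteq\Ucal_\phi\Perp{0}=\Vcal_\phi$; property (ii) holds for any coaisle; and property (iii) follows from Remark~\ref{R:compact}, which exhibits $\Vcal_\phi=\Scal_\phi\Perp{0}$ for a set $\Scal_\phi$ of compact Koszul complexes, combined with the fact that $\Hom_{\Der(\ModR)}(C,-)$ commutes with direct limits in $\Ch(\ModR)$ for any compact $C$, and Fact~\ref{fact} which identifies directed homotopy colimits in $\Der(\ModR)$ with such direct limits.

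Chaining the inclusions gives $\Vcal\subseteq\Wcal\subseteq\Vcal_\phi\subseteq\Vcal$, so $\Vcal=\Vcal_\phi$ and therefore $(\Ucal,\Vcal)=(\Ucal_\phi,\Vcal_\phi)$ is compactly generated. The only mildly delicate point is property (iii)---that a coaisle of a compactly generated t-structure is closed under directed homotopy colimits---but this is a standard consequence of the compactness of the generators together with Fact~\ref{fact}. The genuinely hard work, namely establishing the hypothesis $\Vcal\subseteq\Wcal$, is deferred to the subsequent lemmas where the Foxby--Iyengar infimum machinery will enter.
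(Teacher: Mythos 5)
Your proposal is correct and follows essentially the same route as the paper: define the sp-filtration $\phi(n)=\{\pp : E(R/\pp)[-n]\notin\Vcal\}$ (specialization-closed and decreasing by Lemmas~\ref{L02}, \ref{L03} and closure under $[-1]$), pass to the compactly generated t-structure $(\Ucal_\phi,\Vcal_\phi)$ of Theorem~\ref{T:AJS}, note $\Vcal_\phi\subseteq\Vcal$ from $\Ecal\subseteq\Vcal$, and use minimality of $\Wcal$ together with the fact that a compactly generated coaisle is closed under extensions, cosuspensions, products and directed homotopy colimits to get $\Vcal\subseteq\Wcal\subseteq\Vcal_\phi$. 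The only cosmetic difference is that the paper cites the compactly generated $\Rightarrow$ homotopically smashing fact (\ref{hst}, via \cite[Proposition 5.6]{SSV}) where you sketch it directly from compactness of the Koszul generators and Fact~\ref{fact}.
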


\begin{proof}
	Let $\Ucal' = \Perp{0}\Ecal$ and
	$U_n  = \{\pp \in \Spec(R) \mid E(R/\pp)[-n] \not\in \Vcal\}$ for each $n \in \Z$.
It is seen from Lemmas \ref{L02} and \ref{L03} that each  
$U_n$ is specialization closed.
By using \eqref{star}, we can deduce that 
$$\Ucal'=\{X \in \Der(\ModR) \mid \Supp H^n(X) \subseteq U_n ~\forall n \in \Z\}.$$
As the coaisle $\Vcal$ is closed under cosuspensions,
it follows that $U_{n+1}\subseteq U_n$ for each $n\in \Z$. Hence, the family of the specialization closed subsets $U_n$ naturally gives an sp-filtration.
Then, Theorem~\ref{T:AJS} implies that $\Ucal'$ is the aisle of a compactly generated t-structure $(\Ucal',\Vcal')$ in $\Der(\ModR)$. Since $\Ecal \subseteq \Vcal$, we clearly have $\Vcal' \subseteq \Vcal$. Furthermore, as $\Vcal'$ is the coaisle of a compactly generated t-structure, $\Vcal'$ is closed under extensions, products, and directed homotopy colimits, and therefore $\Ecal \subseteq \Vcal'$ yields  a chain of inclusions $\Wcal \subseteq \Vcal' \subseteq \Vcal$. Therefore, the assumption $\Vcal \subseteq \Wcal$ implies that $\Vcal = \Vcal'$, and whence the t-structure $(\Ucal,\Vcal)$ coincides with the compactly generated t-structure $(\Ucal',\Vcal')$.
\end{proof}
The following is an easy consequence of the closure property of both $\Ucal$ and $\Vcal$ under directed homotopy colimits, see Fact \ref{fact}, \ref{hst} and the proof of \cite[Lemma 4.1]{Hcom}.
\begin{lem}\label{L05}
	For any $\pp \in \Spec(R)$, both subcategories $\Ucal$ and $\Vcal$ are closed under applications of the localization functor $- \otimes^{\mathbf{L}}_R R_{\pp} =- \otimes_R R_{\pp} $.
\end{lem}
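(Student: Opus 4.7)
The plan is to realise the functor $-\otimes_R R_\pp$ as a directed colimit of identity functors in $\Ch(\ModR)$ and then invoke Fact~\ref{fact} together with closure of both $\Ucal$ and $\Vcal$ under directed homotopy colimits. For $\Vcal$ this closure is the standing homotopically smashing hypothesis; for $\Ucal$ it is standard in the presence of a t-structure whose coaisle is closed under cosuspensions, since for a directed system $\{X_i\}$ in $\Ucal = \Perp{0}\Vcal$ and any $V \in \Vcal$, a Milnor-type exact sequence expresses $\Hom_{\Der(\ModR)}(\hocolim X_i, V)$ in terms of vanishing groups $\Hom_{\Der(\ModR)}(X_i, V[-j])$, $j \geq 0$, using $V[-j] \in \Vcal$. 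Flatness of $R_\pp$ over $R$ immediately gives the equality $-\otimes^{\mathbf{L}}_R R_\pp = -\otimes_R R_\pp$, so no flat resolution is needed.

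To compute the localization, set $S = R \setminus \pp$ and regard $S$ as a directed poset via divisibility ($s \leq t$ iff $t \in sR$). Then $R_\pp = S^{-1}R$ is the direct limit in $\ModR$ of the diagram $D \colon S \to \ModR$ given by $D(s) = R$ for every $s \in S$, where for each $s \leq t$ with $t = su$ the transition morphism $D(s) \to D(t)$ is multiplication by $u$. For any $X \in \Ch(\ModR)$, tensoring this diagram termwise with $X$ yields a directed diagram in $\Ch(\ModR)$ whose every component equals $X$, and whose direct limit in $\Ch(\ModR)$ is precisely $X \otimes_R R_\pp$.

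Now if $X \in \Ucal$ (respectively $X \in \Vcal$), every component of the above directed diagram lies in $\Ucal$ (respectively $\Vcal$), so Fact~\ref{fact} together with the closure property discussed in the first paragraph puts the direct limit $X\otimes_R R_\pp$ in the same subcategory. The argument is entirely formal and no serious obstacle is expected; the only point worth spelling out is the closure of $\Ucal$ under directed homotopy colimits, after which the lemma is a transparent rewriting of localization as a directed colimit.
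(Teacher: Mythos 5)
Your overall strategy coincides with the paper's: write $X \otimes_R R_\pp$ as a direct limit in $\Ch(\ModR)$ of a directed diagram all of whose components are $X$ (with transition maps given by multiplication by elements of $S=R\setminus\pp$), and then apply Fact~\ref{fact} together with closure of $\Ucal$ and $\Vcal$ under directed homotopy colimits; for $\Vcal$ this closure is indeed just the homotopically smashing hypothesis, and this is exactly the content of the paper's one-line proof citing Fact~\ref{fact}, \ref{hst} and the proof of \cite[Lemma 4.1]{Hcom}. The gap is in your justification of the closure of $\Ucal$. A Milnor-type exact sequence computing $\Hom_{\Der(\ModR)}(\hocolim_I X_i, V)$ from the groups $\Hom_{\Der(\ModR)}(X_i, V[-j])$, $j\geq 0$, is available only for sequential, $\mathbb{N}$-indexed systems, where the Milnor colimit is the cone of $1-\sigma$ on a coproduct; for a general directed index category there is no such triangle or short exact sequence. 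The poset you use, $S$ ordered by divisibility, is typically uncountable (already for $R=\mathbb{C}[x]$ and $\pp=(x)$) and has no countable cofinal chain, so the sequential argument does not apply, and a naive replacement by a spectral sequence runs into convergence and higher $\varprojlim$ issues. The fact you actually need --- that any aisle is closed under arbitrary, in particular directed, homotopy colimits --- is true, but its proof goes through lifting the t-structure to the diagram categories $\DD(I)$; this is \cite[Proposition 4.2]{SSV}, recorded in the paper as \ref{hst}. Citing that (as the paper does) instead of the Milnor-type sequence closes the gap; everything else in your argument then goes through.

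A minor further point: when $R$ has zero divisors, ``multiplication by $u$ where $t=su$'' does not define a functor on the divisibility preorder, since the cofactor $u$ is not unique and the choices need not compose. One standard fix is to index by the directed poset $\mathbb{N}^{(S)}$ of finitely supported functions $S\to\mathbb{N}$, with $D(\alpha)=R$ and transition maps given by multiplication by the corresponding monomials; the colimit is still $R_\pp$, every component is still $R$, hence every component of the tensored diagram is $X$, and since $-\otimes_R X$ commutes with direct limits the colimit in $\Ch(\ModR)$ is $X\otimes_R R_\pp$ as you need. Alternatively, follow the presentation used in the proof of \cite[Lemma 4.1]{Hcom}. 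The flatness remark ($-\otimes_R^{\mathbf{L}} R_\pp = -\otimes_R R_\pp$) is of course fine.
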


We recall here some basic notions and facts. Let $V$ be a specialization closed subset of $\Spec (R)$. For an $R$-module $M$, we set $\Gamma_V(M)=\{ x\in M \mid \Supp xR \subseteq V\}$. Then $\Gamma_V$ is a left exact functor on the category $\ModR$ of $R$-modules, and it yields a right derived functor $\mathbf{R}\Gamma_V: \Der(\ModR) \rightarrow \Der(\ModR)$.
When $V$ is the Zariski closed subset $V(I)$ for an ideal $I$ of $R$, $\Gamma_{V(I)}$ coincides with the $I$-torsion functor $\Gamma_{I}=\varinjlim_{n\geq 1}\Hom_{R}(R/I^n,-)$ see \cite[\S 3.5]{Lip} and \cite[II; Exercise 5.6]{Har} for more details. 

It is well-known that any injective $R$-module is of the form $\bigoplus_{\pp \in \Spec(R)}E(R/\pp)^{(\varkappa_{\pp})}$, where $\varkappa_{\pp}$ is some cardinal and $E(R/\pp)^{(\varkappa_{\pp})}$ is the coproduct of $\varkappa_{\pp}$-copies of 
$E(R/\pp)$, see \cite[Theorem 18.5]{Matsu}.
As mentioned in the proof of {\em loc. cit.}, the localization of $\bigoplus_{\pp \in \Spec(R)}E(R/\pp)^{(\varkappa_{\pp})}$
at $\pp$ is $\bigoplus_{\qq\subseteq \pp}E(R/\qq)^{(\varkappa_{\qq})}$.
Furthermore, if $V$ is a specialization closed subset $V$ of $\Spec (R)$, then we see that $\Gamma_V$ sends $\bigoplus_{\pp \in \Spec(R)}E(R/\pp)^{(\varkappa_{\pp})}$ to 
$\bigoplus_{\pp\in V}E(R/\pp)^{(\varkappa_{\pp})}$.

Now, we fix an object $X \in \Vcal$, and our aim is to show that $X \in \Wcal$. Then the t-structure $(\Ucal,\Vcal)$ will be compactly generated
by Lemma \ref{L04}. Note that we may assume that $X$ is a complex of injective $R$-modules.
Moreover, for each $\pp \in \Spec(R)$, define a complex $X(\pp)$ as $\Gamma_{\pp}(X \otimes_R R_{\pp})$. It is a subquotient complex of $X$ whose components are coproducts of copies of $E(R/\pp)$.

\begin{lem}\label{L06}
	For each $\pp \in \Spec(R)$, $X(\pp) \in \Wcal$.
\end{lem}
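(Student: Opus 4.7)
My plan has four steps: (i) show $X(\pp) \in \Vcal$; (ii) use a Foxby--Iyengar type input to locate the cohomology of $X(\pp)$; (iii) replace $X(\pp)$ by a quasi-isomorphic complex $J^\bullet$ of coproducts of $E(R/\pp)$ concentrated in the ``allowed'' degrees; (iv) assemble $J^\bullet$ inside $\Wcal$ as a Milnor limit of bounded iterated extensions of stalk complexes.

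For (i), Lemma~\ref{L05} gives $X \otimes_R R_\pp \in \Vcal$. Writing $R_\pp[f^{-1}] = \colim(R_\pp \xrightarrow{f} R_\pp \xrightarrow{f} \cdots)$ expresses $- \otimes_R R_\pp[f^{-1}]$ as a directed homotopy colimit, so $\Vcal$ is also closed under it. Using the \v{C}ech complex on a finite generating set of $\pp R_\pp$, $\mathbf{R}\Gamma_{\pp R_\pp}(X \otimes_R R_\pp)$ is a finite iterated extension (with cosuspensions) of objects of the form $X \otimes_R R_\pp[1/f_S] \in \Vcal$, and therefore itself lies in $\Vcal$. As $X \otimes_R R_\pp$ is a complex of injective $R_\pp$-modules, the underived $\Gamma_{\pp R_\pp}$ computes the derived functor, so $X(\pp) \in \Vcal$.

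For (ii), let $S = \{n \in \Z \mid \kappa(\pp)[-n] \in \Vcal\}$. By Corollary~\ref{L01} and closure of $\Ucal$ under $[1]$, $S$ is upward closed in $\Z$, so $S = [n_0, \infty)$ for some $n_0 \in \Z \cup \{\pm \infty\}$. For $n < n_0$, $\kappa(\pp)[-n] \in \Ucal$ and $X(\pp) \in \Vcal$ force $H^n \RHom_{R_\pp}(\kappa(\pp), X(\pp)) = \Hom_{\Der(\ModR)}(\kappa(\pp)[-n], X(\pp)) = 0$. The cohomology of $X(\pp)$ consists of $\pp$-torsion $R_\pp$-modules (its components being $E(R/\pp)$-coproducts), so the Foxby--Iyengar result \cite{FI} on infima of unbounded complexes applies over $R_\pp$ and yields $\inf X(\pp) \geq \inf \RHom_{R_\pp}(\kappa(\pp), X(\pp)) \geq n_0$. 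If $S = \emptyset$, this forces $X(\pp) = 0$ in $\Der(\ModR)$ and the lemma is immediate; if $S = \Z$, we skip directly to the assembly below.

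For (iii) and (iv), assume $n_0 \in \Z$. Since $H^n(X(\pp)) = 0$ for $n < n_0$, the smart truncation $\tau^{\geq n_0}(X(\pp))$ is quasi-isomorphic to $X(\pp)$ and is a complex of $\pp$-torsion modules; an injective resolution in the Grothendieck category of $\pp$-torsion $R$-modules (whose injective objects are exactly coproducts of $E(R/\pp)$) yields a complex $J^\bullet$ with $J^n = E(R/\pp)^{(\alpha_n)}$ for $n \geq n_0$ and $J^n = 0$ otherwise, still quasi-isomorphic to $X(\pp)$, so it suffices to prove $J^\bullet \in \Wcal$. By definition of $n_0$ and Lemma~\ref{L02}, $E(R/\pp)[-n] \in \Ecal \subseteq \Wcal$ for every $n \geq n_0$, and each coproduct $E(R/\pp)^{(\alpha_n)}[-n]$ is a directed homotopy colimit of its finite sub-coproducts (which agree with finite products), hence lies in $\Wcal$. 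The bounded pieces $\sigma^{\leq n}(J^\bullet) = \sigma^{[n_0,n]}(J^\bullet)$ are then built from these stalks by finitely many extensions via the short exact sequences $0 \to \sigma^{\geq k+1}\sigma^{\leq n}(J^\bullet) \to \sigma^{\geq k}\sigma^{\leq n}(J^\bullet) \to J^{k}[-k] \to 0$, so they lie in $\Wcal$; finally $J^\bullet = \Mlim_n \sigma^{\leq n}(J^\bullet) \in \Wcal$ by closure under Milnor limits. The main obstacle is step (ii): without the Foxby--Iyengar input, one cannot propagate vanishing of $\RHom_{R_\pp}(\kappa(\pp), X(\pp))$ in low degrees to vanishing of the cohomology of $X(\pp)$ itself for unbounded complexes, and it is precisely this vanishing that enables the replacement in (iii).
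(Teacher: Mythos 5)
Your proposal is essentially correct and, at its core, follows the same strategy as the paper: the Foxby--Iyengar theorem \cite{FI} is the key input relating $\inf X(\pp)$ to $\RHom_{R_\pp}(\kappa(\pp),X(\pp))$, after which $X(\pp)$ is rebuilt inside $\Wcal$ from the stalks $E(R/\pp)[-n]\in\Ecal$ using truncations, extensions, coproducts and Milnor limits. The genuine differences are in the routing: you first prove $X(\pp)\in\Vcal$ by a \v{C}ech-complex/telescope argument and then exploit the vanishing of $\Hom_{\Der(\ModR)}(\kappa(\pp)[-n],X(\pp))$ for those $n$ with $\kappa(\pp)[-n]\in\Ucal$, whereas the paper never needs $X(\pp)\in\Vcal$: it applies Lemma~\ref{L00} to $X_\pp\in\Vcal$ (Lemma~\ref{L05}) through the adjunction isomorphism $\RHom_R(\kappa(\pp),X(\pp))\simeq\RHom_R(\kappa(\pp),X_\pp)$, obtaining $\kappa(\pp)[-n]\in\Vcal$ directly for all $n\geq\inf\RHom_R(\kappa(\pp),X_\pp)$. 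You also trade the citation of \cite[Proposition 2.1]{CI} for a hand-made injective resolution in a torsion category, and you dispose of the degenerate case without Neeman's localizing-subcategory lemma \cite[Lemma 2.9]{N1}; both substitutions are legitimate in principle.

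Two points need repair. First, the parenthetical claim that the injective objects of the category of $\pp$-torsion $R$-modules are exactly the coproducts of copies of $E(R/\pp)$ is false: $E(R/\qq)$ is $\pp$-torsion for every $\qq\supseteq\pp$, so the injectives of that category are the modules $\bigoplus_{\qq\supseteq\pp}E(R/\qq)^{(\varkappa_\qq)}$. The statement you want is true for the category of $\pp R_\pp$-torsion $R_\pp$-modules, and since $\tau^{\geq n_0}(X(\pp))$ is a complex of objects of that category (everything in sight is an $R_\pp$-module with $R_\pp$-linear differentials), performing the resolution there repairs the step; alternatively, quote \cite[Proposition 2.1]{CI} as the paper does. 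Second, the case $S=\Z$ is not actually covered by ``the assembly below'': that assembly presupposes a finite lower bound $n_0$, while for $S=\Z$ the truncations $\sigma^{\leq n}(X(\pp))$ are unbounded below and are not finite iterated extensions of stalks. Here you must in addition write $X(\pp)\simeq\Mcolim_{m\geq 0}\sigma^{\geq -m}(X(\pp))$, apply your bounded-below assembly to each $\sigma^{\geq -m}(X(\pp))$ (all shifts $E(R/\pp)[-n]$ lie in $\Ecal$ in this case), and invoke closure of $\Wcal$ under Milnor colimits, which follows from property (iii) as noted before Lemma~\ref{L04}; this is precisely the paper's ``standard argument using both left and right brutal truncations''.
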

\begin{proof}
	Put $Z = \RHom_R(\kappa(\pp),X(\pp))$. Since $X$ is a complex of injective modules, $X(\pp)=\Gamma_{\pp}(X_\pp) \simeq \RGamma_{\pp}(X_\pp)$ by \cite[Lemma 3.5.1]{Lip}. Therefore, using the adjunction of $\RGamma_{\pp}$, we get that
\begin{align}\label{star2}\tag{$**$}
Z = \RHom{}_R(\kappa(\pp),\mathbf{R}\Gamma_{\pp}(X_\pp)) \simeq  \RHom{}_R(\kappa(\pp),X_\pp),
\end{align}
see \cite[Proposition 3.5.4(ii)]{Lip}. 
Furthermore, by a result of Foxby and Iyengar \cite[Theorem 2.1]{FI}, we have the following equality:
	$$\inf \mathbf{R}\Gamma_{\pp}(X(\pp)) = \inf \RHom_{R}(\kappa(\pp),X(\pp)),$$
where we implicitly used two standard isomorphisms
$$\mathbf{R}\Gamma_{\pp}(X(\pp))\cong \mathbf{R}\Gamma_{\pp R_\pp}(X(\pp)) \ \text{and}\  \RHom_{R}(\kappa(\pp),X(\pp))\cong \RHom_{R_\pp}(\kappa(\pp),X(\pp)).$$
As $X(\pp) \simeq \mathbf{R}\Gamma_{\pp}(X(\pp))$ and $Z=\RHom{}_{R}(\kappa(\pp),X(\pp))$, it follows that 
$$\inf X(\pp) = \inf Z.$$
If $\inf Z= \infty$, that is, $Z=0$, then $X(\pp)$ must be zero in $\Der(\ModR)$, because $X(\pp)$ belongs to the smallest localizing subcategory  of $\Der(\ModR)$ containing $\kappa(\pp)$, see \cite[Lemma 2.9]{N1}.
Hence $X(\pp)\in \Wcal$ in this case.

Suppose that $\inf Z< \infty$. 
Note that $X_\pp \in \Vcal$ by Lemma~\ref{L05}.
Using \eqref{star2}, Lemma~\ref{L00} and Lemma \ref{L02}, we conclude that for any $n \in \Z$
\begin{equation*}%\label{E00}
\inf Z \leq n \Longrightarrow \kappa(\pp)[-n] \in \Vcal \iff E(R/\pp)[-n]\in \Vcal \stackrel{\text{def}}{\iff} E(R/\pp)[-n] \in \Ecal.
	\end{equation*}
Therefore, if $\inf Z = - \infty$, then $E(R/\pp)[n] \in \Ecal$ for all $n \in \Z$. As $\Wcal$ is closed under extensions, coproducts, and both Milnor limits and colimits, a standard argument using both left and right brutal truncations shows $X(\pp) \in \Wcal$.

When $\inf Z \in \Z$, we may replace $X(\pp)$ by a complex concentrated in degrees greater than or equal to $\inf Z$, where its components are coproducts of copies of $E(R/\pp)$, see e.g. \cite[Proposition 2.1]{CI}. Since $E(R/\pp)[-n] \in \Ecal$ for all $n \geq \inf Z$, the closure property of $\Wcal$ under extensions, coproducts, and Milnor limits imply $X(\pp) \in \Wcal$.
\end{proof}
\begin{lem}\label{L07}
	The complex $X$ belongs to $\Wcal$.
\end{lem}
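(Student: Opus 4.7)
The plan is to build $X$ transfinitely from the building blocks $X(\pp) \in \Wcal$ of Lemma~\ref{L06} via a filtration by specialization-closed subsets of $\Spec R$. First, I would construct by transfinite recursion a chain $\emptyset = V_0 \subseteq V_1 \subseteq \cdots \subseteq V_\kappa = \Spec R$ of specialization-closed subsets as follows: at each successor ordinal $\alpha + 1$ with $V_\alpha \subsetneq \Spec R$, pick a prime $\pp_\alpha$ maximal in $\Spec R \setminus V_\alpha$ and set $V_{\alpha+1} = V_\alpha \cup \{\pp_\alpha\}$; at a limit ordinal $\lambda$, set $V_\lambda = \bigcup_{\alpha < \lambda} V_\alpha$. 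The existence of $\pp_\alpha$ follows from Zorn's lemma: in a Noetherian ring every chain of prime ideals is finite, since the union of a strictly ascending chain could not be a finitely generated ideal. Maximality of $\pp_\alpha$ in $\Spec R \setminus V_\alpha$ ensures $V_{\alpha+1}$ is specialization-closed, as any $\qq \supsetneq \pp_\alpha$ necessarily lies in $V_\alpha$.

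Next, I would prove $\Gamma_{V_\alpha}(X) \in \Wcal$ for every $\alpha \leq \kappa$ by transfinite induction. The base case is trivial, with $\Gamma_{V_0}(X) = 0 \in \Wcal$. For a successor stage, the short exact sequence in $\Ch(\ModR)$
\[
0 \to \Gamma_{V_\alpha}(X) \to \Gamma_{V_{\alpha+1}}(X) \to \Gamma_{V_{\alpha+1}}(X)/\Gamma_{V_\alpha}(X) \to 0
\]
has quotient with degree-$i$ component $E(R/\pp_\alpha)^{(\varkappa_{i,\pp_\alpha})}$ and differentials given by the $\pp_\alpha$-isotypical projection of $d_X$. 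This identifies the quotient in $\Ch(\ModR)$ with $X(\pp_\alpha)$, which lies in $\Wcal$ by Lemma~\ref{L06}; closure of $\Wcal$ under extensions then puts $\Gamma_{V_{\alpha+1}}(X) \in \Wcal$. At a limit ordinal $\lambda$, the equality $\Gamma_{V_\lambda}(X) = \varinjlim_{\alpha < \lambda} \Gamma_{V_\alpha}(X)$ presents $\Gamma_{V_\lambda}(X)$ as a directed union in $\Ch(\ModR)$, so Fact~\ref{fact} together with property (iii) of $\Wcal$ gives $\Gamma_{V_\lambda}(X) \in \Wcal$. Applying this at $\alpha = \kappa$ yields $X = \Gamma_{\Spec R}(X) \in \Wcal$.

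The main obstacle is the identification of the successor quotient $\Gamma_{V_{\alpha+1}}(X)/\Gamma_{V_\alpha}(X)$ with $X(\pp_\alpha)$ as complexes. The components match from $V_{\alpha+1} \setminus V_\alpha = \{\pp_\alpha\}$, but the differentials require care: for $e \in E(R/\pp_\alpha) \subseteq \Gamma_{V_{\alpha+1}}(X^i)$, any nonzero $E(R/\qq)$-component of $d_X(e)$ must have $\pp_\alpha \subseteq \qq$, since $\Hom_R(E(R/\pp_\alpha), E(R/\qq))$ vanishes otherwise. By maximality of $\pp_\alpha$ in $\Spec R \setminus V_\alpha$, either $\qq = \pp_\alpha$ (component survives in quotient) or $\qq \supsetneq \pp_\alpha$ and $\qq \in V_\alpha$ (component killed in quotient). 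The resulting differentials match those of $X(\pp_\alpha) = \Gamma_{\pp_\alpha R_{\pp_\alpha}}(X_{\pp_\alpha})$, because localization at $\pp_\alpha$ acts as the identity on $E(R/\pp_\alpha)$ and $\Gamma_{\pp_\alpha R_{\pp_\alpha}}$ kills $E(R/\qq)$ for $\qq \subsetneq \pp_\alpha$.
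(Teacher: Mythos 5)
Your proof is correct and follows essentially the same route as the paper: exhausting $\Spec(R)$ by specialization-closed subsets obtained by adjoining one maximal prime of the complement at a time, using the degreewise split exact sequence $0 \to \Gamma_{V}(X) \to \Gamma_{V'}(X) \to X(\pp) \to 0$ together with closure of $\Wcal$ under extensions, and closure under directed homotopy colimits at limit stages. The only differences are cosmetic: you run an explicit transfinite recursion where the paper invokes Zorn's lemma on the poset of specialization-closed $V$ with $\Gamma_V(X) \in \Wcal$, and you spell out the identification of the successor quotient with $X(\pp_\alpha)$, which the paper treats as canonical.
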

\begin{proof}
	First, let $V_0$ be the set of all maximal prime ideals in $\Spec(R)$. Then it is elementary to see that $\Gamma_{V_0}(X) = \bigoplus_{\pp \in V_0}X(\pp)$, and therefore $\Gamma_{V_0}(X)$ belongs to $\Wcal$ by Lemma~\ref{L06}.

	Let $\Vscr$ be the poset of all specialization closed subsets $V$ of $\Spec(R)$ such that $\Gamma_{V}(X) \in \Wcal$, ordered by inclusion. Then $\Vscr$ is inductive. Indeed, $V_0 \in \Vscr$, and for every increasing chain $(V_\alpha \mid \alpha<\lambda)$ of elements of $\Vscr$, the direct limit of the induced directed system $(\Gamma_{V_\alpha}(X) \mid \alpha<\lambda)$ belongs to $\Wcal$, because $\Wcal$ is closed under directed homotopy colimits. 

	Then Zorn's Lemma applies to $\Vscr$, so let $V$ be a maximal element of $\Vscr$. Towards a contradiction, assume that there is a prime $\pp \in \Spec(R) \setminus V$. Since $R$ is noetherian, we can assume that $\pp$ is a maximal element in $\Spec(R) \setminus V$, and then $V' = V \cup \{\pp\}$ is a specialization closed subset of $\Spec(R)$. 	
Note that there is a canonical degreewise split exact sequence
$$0 \rightarrow \Gamma_{V}(X) \rightarrow \Gamma_{V'}(X) \rightarrow X(\pp) \rightarrow 0$$
in the category of cochain complexes. As $\Wcal$ is closed under extensions in $\Der(\ModR)$ and $X(\pp)\in \Wcal$ by Lemma \ref{L06}, it follows that $\Gamma_{V'}(X) \in \Wcal$, establishing the contradiction with the maximality of $V$.
In conclusion, $X = \Gamma_{\Spec(R)}(X) \in \Wcal$, as desired.
\end{proof}
Combining Lemma~\ref{L07} with Lemma~\ref{L04}, we have proved Theorem~\ref{T01}. We conclude this section with a couple of particular consequences of Theorem~\ref{T01} to the cosilting theory of $\Der(\ModR)$. The terminology and basic facts about cosilting objects are explained in \ref{SS:cosilting}. The following result should be compared with \cite[Theorem 7.8]{A}.

\begin{cor}\label{C:cosilting}
	Let $R$ be a commutative noetherian ring. Then any pure-injective cosilting object in $\Der(\ModR)$ is of cofinite type.
\end{cor}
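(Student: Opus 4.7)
The approach is to read off the corollary directly from Theorem~\ref{T01} by combining it with the dictionary (referenced in the introduction and spelled out in \ref{SS:cosilting}) between pure-injective cosilting objects and homotopically smashing t-structures. The two ingredients needed are: (a) a pure-injective cosilting object $C$ in $\Der(\ModR)$ induces a non-degenerate t-structure $(\Ucal_C,\Vcal_C)$ which is homotopically smashing, and (b) cofinite type for a cosilting object $C$ is (by definition/characterization) equivalent to the associated t-structure $(\Ucal_C,\Vcal_C)$ being compactly generated.

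Concretely, I would proceed as follows. First, I would recall that any cosilting object $C$ gives rise to a non-degenerate t-structure whose coaisle is the cosilting class $\Vcal_C = \Perp{>0}C$ (and in particular is closed under products, extensions, and cosuspensions). Next, I would invoke Laking's theorem \cite{L}, cited in the introduction, which states that a non-degenerate t-structure in $\Der(\ModR)$ is homotopically smashing precisely when it is induced by a pure-injective cosilting object; so the hypothesis that $C$ is pure-injective guarantees that $(\Ucal_C,\Vcal_C)$ is homotopically smashing. Then I would apply Theorem~\ref{T01} to conclude that $(\Ucal_C,\Vcal_C)$ is compactly generated, i.e.\ $\Vcal_C = \Scal\Perp{0}$ for some set $\Scal \subseteq \Der(\ModR)^c$. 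Finally, by the definition of cofinite type recalled in \ref{SS:cosilting} (namely, that the cosilting class $\Vcal_C$ is the right $\Perp{0}$ of a set of compact objects, equivalently that the associated t-structure is compactly generated), this says exactly that $C$ is of cofinite type.

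There is essentially no obstacle: the corollary is a formal consequence of Theorem~\ref{T01} together with the two cited facts, both of which belong to the general machinery the paper is building on. The only thing requiring care is matching conventions—that the notion of cofinite type used in \ref{SS:cosilting} coincides with "the induced t-structure is compactly generated," and that Laking's equivalence genuinely identifies pure-injective cosilting objects with homotopically smashing non-degenerate t-structures over $\Der(\ModR)$. Once those are in place, the proof is a one-line deduction.
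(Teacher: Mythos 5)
Your proposal is correct and follows the same route as the paper: the pure-injective cosilting object induces a (non-degenerate) homotopically smashing t-structure by the facts recalled in \ref{SS:cosilting} (Laking's characterization), Theorem~\ref{T01} then gives compact generation, and cofinite type holds by definition. This matches the paper's proof essentially verbatim.
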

\begin{proof}
	Let $C \in \Der(\ModR)$ be a pure-injective cosilting object. Using the facts from \ref{SS:cosilting}, the t-structure $(\Perp{\leq 0}C, \Perp{>0}C)$ induced by $C$ is homotopically smashing. By Theorem~\ref{T01}, this t-structure is compactly generated, which establishes that $C$ is of cofinite type by definition.
\end{proof}

\begin{cor}\label{C:bounded}
	Let $R$ be a commutative noetherian ring. Then any pure-injective cosilting object in $\Der(\ModR)$ is cohomologically bounded below. That is, for any pure-injective cosilting object $C \in \Der(\ModR)$, there is an integer $m$ such that $H^i(C)=0$ for $i\leq m$.
\end{cor}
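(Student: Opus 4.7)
The plan is to translate the cohomological boundedness of $C$ into a stabilization property of its associated sp-filtration (available via Corollary~\ref{C:cosilting}) and then exploit the cogeneration property of the cosilting object together with its pure-injectivity.

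First I will invoke Corollary~\ref{C:cosilting}: since $C$ is of cofinite type, the cosilting t-structure $(\Ucal,\Vcal)=(\Perp{\leq 0}C,\Perp{>0}C)$ is compactly generated, and hence equals $(\Ucal_\phi,\Vcal_\phi)$ for a unique sp-filtration $\phi$ by Theorem~\ref{T:AJS}. Using Remark~\ref{R:compact} and $C\in\Vcal=\Scal_\phi^{\Perp{0}}$, together with the observation that the Koszul complex on the empty set is $K(0)=R$ with $V(0)=\Spec(R)$, we get $R[-n]\in\Scal_\phi$ precisely when $\phi(n)=\Spec(R)$, and therefore $H^n(C)=\Hom_{\Der(\ModR)}(R[-n],C)=0$ for every such $n$. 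Because $\phi$ is decreasing, the corollary reduces to exhibiting a single integer $m\in\Z$ with $\phi(m)=\Spec(R)$.

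Next I will recast this stabilization in terms of the numerical invariants $i(\pp):=\inf\RHom_R(\kappa(\pp),C)$. A direct unpacking of $\Ucal=\Perp{\leq 0}C$ shows that $\kappa(\pp)[-n]\in\Ucal$ if and only if $\Ext^k_R(\kappa(\pp),C)=0$ for all $k\leq n$, equivalently $i(\pp)>n$; coupled with Corollary~\ref{L01} and the description $\pp\in\phi(n)\iff\kappa(\pp)[-n]\in\Ucal_\phi$, this gives $\pp\in\phi(n)\iff i(\pp)>n$. The cogeneration axiom of cosilting forces $\RHom_R(\kappa(\pp),C)\neq 0$, so $i(\pp)\in\Z$ for every $\pp$. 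Consequently, $\phi(m)=\Spec(R)$ if and only if $m<\inf_{\pp\in\Spec(R)}i(\pp)$, so the proof reduces to establishing the \emph{uniform} lower bound $\inf_\pp i(\pp)>-\infty$.

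The hard part is precisely this uniform bound, and I expect it to be the main obstacle: cogeneration delivers finiteness of each $i(\pp)$ individually, but a uniform control demands a structural input not yet used. My plan is to invoke the pure-injectivity of $C$ via a structural description of pure-injective cosilting objects in $\Der(\ModR)$---for instance Laking's characterization of the heart as a Grothendieck category with $C$ serving as an injective cogenerator~\cite{L}, or an explicit representative of $C$ by injectives whose components vanish uniformly below some degree---and to combine it with the Foxby--Iyengar equality $\inf\mathbf{R}\Gamma_\pp(C_\pp)=\inf\RHom_R(\kappa(\pp),C)$ used in Lemma~\ref{L06} to conclude $\inf_\pp i(\pp)>-\infty$, finishing the argument.
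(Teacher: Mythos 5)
Your reduction is sound and runs parallel to the paper's proof: compact generation (Corollary~\ref{C:cosilting}) gives an sp-filtration $\phi$, the dictionary $\pp\in\phi(n)\iff i(\pp)>n$ with $i(\pp)=\inf\RHom_R(\kappa(\pp),C)$ is correct, and by decreasingness it suffices to produce one $m$ with $\phi(m)=\Spec(R)$ (your way of then concluding $H^n(C)=0$ for $n\leq m$ via the Koszul generator $R[-n]$ is fine, and slightly more elementary than the paper's use of the local-cohomology description of $\Vcal_\phi$). But the proposal stops exactly where the content of the corollary lies: the uniform bound $\inf_{\pp}i(\pp)>-\infty$ is not proved, only planned, and the proposed tools do not close it. Laking's theorem (Grothendieck heart with $C$ an injective cogenerator) has no evident bearing on a uniform degree bound; ``an explicit representative of $C$ by injectives whose components vanish uniformly below some degree'' is essentially the statement to be proved, so that route is circular; and the Foxby--Iyengar equality computes the infimum of one localized piece at a time, so it cannot by itself produce uniformity over $\Spec(R)$. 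Note also that even your pointwise claim $i(\pp)\in\Z$ is only half-justified as written: $\RHom_R(\kappa(\pp),C)\neq 0$ gives $i(\pp)<\infty$, whereas $i(\pp)>-\infty$ requires non-degeneracy (for instance via Lemma~\ref{L00}, closure of $\Vcal$ under $[-1]$, and $\bigcap_{n}\Vcal[n]=0$, or via \cite[Remark 7.7(2)]{A} as the paper does).

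The missing step is in fact easy and needs no input from pure-injectivity beyond its role in Corollary~\ref{C:cosilting}. Non-degeneracy of the cosilting t-structure gives $\bigcup_{n\in\Z}\phi(n)=\Spec(R)$, which is exactly the pointwise finiteness $i(\pp)>-\infty$ for every $\pp$. Each $\phi(n)$ is specialization closed, equivalently $i(\pp)\geq i(\qq)$ whenever $\qq\subseteq\pp$ (cf.\ Lemma~\ref{L03}), so $\inf_{\pp}i(\pp)$ equals the infimum over the minimal primes; since $R$ is noetherian there are only finitely many of these, say $\pp_1,\dots,\pp_r$, and choosing $n_j$ with $\pp_j\in\phi(n_j)$ and $m=\min_j n_j$ gives $\phi(m)\supseteq\{\pp_1,\dots,\pp_r\}$, hence $\phi(m)=\Spec(R)$. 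This finiteness-of-minimal-primes observation is precisely how the paper concludes; with it your argument closes, but as submitted it has a genuine gap at its decisive step.
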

\begin{proof}
	Let $(\Ucal,\Vcal) = (\Perp{\leq 0}C, \Perp{>0}C)$ be the t-structure induced by a pure-injective cosilting object $C \in \Der(\ModR)$. By Corollary~\ref{C:cosilting} and Theorem~\ref{T:AJS}, there is an sp-filtration $\phi$ of $\Spec(R)$ such that $\Ucal = \{X \in \Der(\ModR) \mid \Supp H^n(X) \subseteq \phi(n) ~\forall n \in \Z\}$. As the t-structure $(\Ucal,\Vcal)$ is non-degenerate (see \ref{SS:cosilting}), the sp-filtration $\phi$ satisfies $\bigcup_{n \in \Z} \phi(n) = \Spec(R)$ and $\bigcap_{n \in \Z} \phi(n) = \emptyset$, see \cite[Remark 7.7(2)]{A}.

	Since $R$ is noetherian, the set of minimal prime ideals in $\Spec(R)$ is finite. Hence the condition $\bigcup_{n \in \Z}\phi(n) = \Spec(R)$ implies that there is $m \in \Z$ such that $\phi(m) = \Spec(R)$. It then follows that $C \in \Der^{>m}$ (see Example \ref{standard t-structure}) because 
$C \in \Vcal$ by definition and $\Vcal = \{X \in \Der(\ModR) \mid \mathbf{R}\Gamma_{\phi(n)}(X) \in \Der{}^{>n}  ~\forall n \in \Z\}$ by \cite[Theorem 3.11]{AJS}.
\end{proof}

%%%%%%%%%%%%%%%%%%%%%%%%%%
%%%%%%%%%%%%%%%%%%%%%%%%%%

\appendix
\section{Telescope conjecture for homotopically smashing t-structures in Grothendieck derivators}

\begin{A} {\bf Homotopy (co)limits in triangulated categories.}  
Let $\Cat$ stand for the 2-category of all small categories and $\CAT$ be the ``2-category'' of all categories. A \EMP{derivator} is a 2-functor $\DD: \Cat^{\op} \rightarrow \CAT$ satisfying certain conditions, we refer the reader to \cite{Gro} and references therein for the precise definition and for the introduction to the theory of derivators. Let $\star \in \Cat$ denote the category with a single object and a single map. Then we call $\DD(\star)$ the \EMP{underlying} category of the derivator $\DD$. For every small category $I \in \Cat$ we denote the unique functor $I \rightarrow \star$ by $\pi_I$. It is a part of the definition of a derivator that the induced functor $\DD(\pi_I): \DD(\star) \rightarrow \DD(I)$ admits both the  left and right adjoint functors:
		$$
	\begin{tikzcd}[row sep=large, column sep=huge]
		\DD(\star)	 \arrow{r}{\DD(\pi_I)} & \DD(I)  \arrow[bend right]{l}[above]{\hocolim_I}  \arrow[bend left]{l}[below]{\holim_I}
	\end{tikzcd}
		$$
The right adjoint to $\DD(\pi_I)$ is denoted by $\holim_I: \DD(I) \rightarrow \DD(\star)$ and is called the \EMP{homotopy limit} functor, while the left adjoint is denoted by $\hocolim_I: \DD(I) \rightarrow \DD(\star)$ and called the \EMP{homotopy colimit} functor. 

 For any $I \in \Cat$ and any object $i \in I$, we let $\iota_i: \star \rightarrow I$ be the unique functor sending the only object of $\star$ to $i \in I$. The functors $\DD(\iota_i): \DD(I) \rightarrow \DD(\star)$ induce a functor $\dia_I: \DD(I) \rightarrow \DD(\star)^I$ called the \EMP{diagram functor}. It is a part of the motivation for the theory of derivators that the diagram functor is rarely an equivalence. The usual terminology refers to objects of $\DD(I)$ as \EMP{coherent diagrams} of shape $I$. For any coherent diagram $\Xscr \in \DD(I)$, we call $\dia_I(\Xscr)$ the \EMP{(incoherent) diagram} underlying the coherent diagram $\Xscr$.

 In our context, we require that our derivator is in addition \EMP{strong} and \EMP{stable}, for precise definitions see \cite{Gro}. Here, we just remark that the condition of being strong allows us to lift incoherent diagrams to coherent diagrams along the diagram functor for certain special shapes, while the stability condition implies that $\DD(I)$ is a triangulated category for all $I \in \Cat$, and that the homotopy limit and colimit functors are triangulated. 

Let $\DD$ be a strong and stable derivator. We say that a subcategory $\Ccal$ of $\DD(\star)$ is \EMP{closed under homotopy colimits} if $\hocolim_I \Xscr \in \Ccal$ for any small category $I$ and any coherent diagram $\Xscr \in \DD(I)$ such that all components of the underlying diagram $\dia_I(\Xscr)$ belong to $\Ccal$. A subcategory \EMP{closed under homotopy limits} is defined analogously. We say that a subcategory of $\DD(\star)$ is \EMP{closed under directed homotopy colimits} if it is closed under homotopy colimits defined over directed small categories $I$.

 Finally, a derivator $\DD$ is \EMP{compactly generated} if it is strong and stable, and if the triangulated category $\DD(\star)$ is compactly generated. As a consequence, the triangulated category $\DD(I)$ is compactly generated for any $I \in \Cat$, see \cite[Lemma 3.2]{L}. 
\end{A}

\begin{exampleA}\label{example derivator}
	Let $\Gcal$ be a Grothendieck category. For any small category $I$ consider $\Gcal^I$, the Grothendieck category of all $I$-shaped diagrams in $\Gcal$. The derived category $\Der(\Gcal^I)$ of $\Gcal^I$ can be naturally considered as the Verdier localization of the diagram category $\Ch(\Gcal)^I$ of cochain complexes of objects of $\Gcal$. Then the assignment $\DD_{\Gcal}: \Cat \rightarrow \CAT$ given by $I \mapsto \Der(\Gcal^I)$ naturally extends to a strong and stable derivator called the \EMP{standard derivator} on the Grothendieck category $\Gcal$, see \cite[\S 5]{St} for details. The constant diagonal functor $\Delta_I: \Ch(\Gcal) \rightarrow \Ch(\Gcal)^I$ is exact, and its left and right adjoints are the usual colimit and limit functors on the Grothendieck category $\Ch(\Gcal)$ respectively. Deriving this adjunction yields the following adjunction picture:
		$$
	\begin{tikzcd}[row sep=large, column sep=huge]
		\Der(\Gcal)	 \arrow{r}{\Delta_I} & \Der(\Gcal^I)  \arrow[bend right]{l}[above]{\Lcolim_I}  \arrow[bend left]{l}[below]{\Rlim_I}
	\end{tikzcd}
		$$
	Comparing this adjunction with the construction of the derivator $\DD_{\Gcal}$ in \cite[\S 5]{St}, it follows that in this situation, the homotopy limit and colimit functors yielded by the derivator $\DD_{\Gcal}$ are naturally equivalent to the derived limit and colimit functors on $\Der(\Gcal^I)$: $\holim_I = \Rlim_I$ and $\hocolim_I = \Lcolim_I$. The diagram functor 
	$$\dia_I: \Der(\Gcal^I) \rightarrow \Der(\Gcal)^I$$ 
just sends a coherent diagram of $\Der(\Gcal^I)$ to an ordinary $I$-shaped diagram of $\Der(\Gcal)^I$ in a natural way. 

	Let $I$ be a directed small category. Then the direct limit functor $\varinjlim_I$ is exact in the Grothendieck category $\Ch(\Gcal)$, and therefore we get for any coherent diagram $\Xscr \in \Der(\Gcal^I)$ an isomorphism $\hocolim_I(\Xscr) = \Lcolim_I (\Xscr) \simeq \varinjlim_I \Xscr$ in $\Der(\Gcal)$, where the last direct limit is computed in $\Ch(\Gcal)$ (cf. \cite[The proof of Proposition 6.6]{St}. Hence a subcategory of $\Der(\Gcal)$ is closed under directed homotopy colimits if and only if it is closed under direct limits computed in the category $\Ch(\Gcal)$ of cochain complexes.
\end{exampleA}

%%%%%%%%%%%%%%%%%%%%%%%%%%
A standard derivator on the category of modules over a ring is compactly generated. A more general source of examples of compactly generated derivators comes from (compactly generated) stable model categories, see \cite[Example 4.2(1)]{Gro2}. Such examples include derived categories of small dg categories and the stable homotopy category of spectra.
%%%%%%%%%%%%%%%%%%%%%%%%%%

\begin{A}{\bf Homotopically smashing t-structures.}\label{hst}
	Let $\DD$ be a strong and stable derivator and let $(\Ucal,\Vcal)$ be a t-structure in $\DD(\star)$. Then the aisle $\Ucal$ is closed under homotopy colimits and the coaisle $\Vcal$ is closed under homotopy limits; this is \cite[Proposition 4.2]{SSV}. Following \cite{SSV}, we say that a t-structure $(\Ucal,\Vcal)$ is \EMP{homotopically smashing} if the coaisle $\Vcal$ is closed under directed homotopy colimits. 
Any compactly generated t-structure $(\Ucal,\Vcal)$ is homotopically smashing, see \cite[Proposition 5.6]{SSV}.
\end{A}

%%%%%%%%%%%%%%%%%%%%%%%%%%%

\begin{exampleA}\label{standard t-structure}
 Let us give a typical example of homotopically smashing t-structures in the case of the standard derivator $\DD_{\Gcal}$ of a Grothendieck category $\Gcal$.
For any $n \in \Z$, there is a \EMP{standard} t-structure $(\Der{}^{\leq n},\Der{}^{>n})$ in the derived category $\Der(\Gcal) = \DD_{\Gcal}(\star)$, where $\Der{}^{\leq n}$ (resp. $\Der{}^{> n}$) stands for the subcategory of complexes $X$ with $H^i(X) = 0$ for $i>n$ (resp. $i\leq n$).
The approximation triangle induced by this t-structure is given by the soft truncations $\tau^{\leq n}$ and $\tau^{>n}$ of cochain complexes in degree $n$. Let $I$ be a directed small category and $\Xscr \in \Der(\Gcal^I)$ a coherent diagram. Let us fix an ordinary diagram $\{X_i\}_{i \in I} \in \Ch(\Gcal^I) \simeq \Ch(\Gcal)^I$ representing $\Xscr$. Since $H^n(\hocolim_I \Xscr) \simeq \varinjlim_I H^n(X_i)$, it is easy to see that the standard t-structure $(\Der{}^{\leq n},\Der{}^{>n})$ is homotopically smashing. Furthermore, if $\Gcal$ admits a small projective generator (and therefore, is equivalent to a module category), the derivator $\DD_{\Gcal}$, as well as the t-structure $(\Der{}^{\leq n},\Der{}^{>n})$, are easily seen to be compactly generated.
\end{exampleA}

%%%%%%%%%%%%%%%%%%%%%%%%%%

\begin{A}{\bf  Smashing localizations.} \label{SS:stable}
Note that if $(\Ucal,\Vcal)$ is a homotopically smashing t-structure, then in particular $\Vcal$ is closed under coproducts. The converse implication is often not true as demonstrated in \cite[Example 6.2]{SSV}. This distinction is not visible among the t-structures coming from localizing theory, as we shortly discuss below.

A t-structure $(\Ucal,\Vcal)$ in a triangulated category $\Tcal$ is called \EMP{stable} if $\Ucal[-1] \subseteq \Ucal$, or equivalently, if both $\Ucal$ and $\Vcal$ are triangulated subcategories of $\Tcal$. Assume that $\Tcal$ is an underlying category of a compactly generated derivator. The aisles of such t-structures are precisely the kernels of Bousfield localizations of $\Tcal$, that is, triangulated coreflective subcategories of $\Tcal$. Recall that a subcategory $\Lcal$ of $\Tcal$ is a \EMP{smashing subcategory} if it is a kernel of a Bousfield localization functor which preserves all coproducts. Then a stable t-structure $(\Ucal,\Vcal)$ in $\Tcal$ is homotopically smashing if and only if $\Ucal$ is a smashing subcategory of $\Tcal$. This follows from \cite[Theorem A]{Kr}, see also \cite[Proposition 5.6]{SSV}, \cite[Theorem 3.12]{L} and \cite[Proposition 6.3]{LV}.
\end{A}

%%%%%%%%%%%%%%%%%%%%%%%%%%

\begin{A}{\bf Telescope conjecture for homotopically smashing t-structures.}
Given a compactly generated triangulated category $\Tcal$, the telescope conjecture is a question which asks whether all smashing subcategories are compactly generated. In the light of \ref{SS:stable}, when $\Tcal$ is the underlying category of a compactly generated derivator, the telescope conjecture asks equivalently whether any stable homotopically smashing t-structure is compactly generated. The following is a natural generalization of this question to t-structures which are not necessarily stable.
\end{A}

%%%%%%%%%%%%%%%%%%%%%%%%%%

\begin{questA}\label{Q0}
For which compactly generated derivator $\DD$ is every homotopically smashing t-structure in $\DD(\star)$ compactly generated?
\end{questA}

Theorem \ref{T01} shows that  Question~\ref{Q0} has an affirmative answer in the case of the standard derivator $\DD_{\ModR}$ of Example~\ref{example derivator} for the module category of a commutative noetherian ring $R$. 

\begin{A}{\bf Cosilting t-structures.}\label{SS:cosilting}
We discuss another important source of examples of homotopically smashing t-structures which comes from a general version of tilting theory. Let $\DD$ be a compactly generated derivator. If $(\Ucal,\Vcal)$ is a t-structure in $\DD(\star)$, the category $\Hcal = \Ucal[-1] \cap \Vcal$ is called the \EMP{heart} of the t-structure, and is always an abelian category and there is a cohomological functor from $\DD(\star)$ to $\Hcal$, \cite{BBD}. 

	A t-structure $(\Ucal,\Vcal)$ is \EMP{non-degenerate} if $\bigcap_{n \in \Z}\Ucal[n] = 0 = \bigcap_{n \in \Z}\Vcal[n]$. This condition characterizes the case in which the cohomological functor to the heart of $(\Ucal,\Vcal)$ is conservative. Following \cite{PV} and \cite{NSZ}, we call an object $C$ of $\DD(\star)$ \EMP{cosilting} if the pair $(\Perp{\leq 0}C,\Perp{>0}C)$ forms a t-structure in $\DD(\star)$. Such a t-structure is non-degenerate and its heart has all coproducts and admits an injective cogenerator, \cite[Theorem 3.5]{AMV2}.

	Let $(\Ucal,\Vcal)$ be a non-degenerate t-structure such that $\Vcal$ is closed under all coproducts. Then the heart $\Hcal$ is a Grothendieck category if and only if $(\Ucal,\Vcal)$ is homotopically smashing and this is further equivalent to the t-structure being induced by a cosilting object which is pure-injective in $\DD(\star)$ in the sense of \cite[\S 1]{Kr} (see \cite[Corollary 3.8]{AMV2} and \cite[Theorem 4.6]{L}). In conclusion, non-degenerate homotopically smashing t-structures parametrize pure-injective cosilting objects in $\DD(\star)$ (up to a suitable equivalence). Furthermore, in many situations the assumption of pure-injectivity is redundant, see \cite[Remark 3.11(1)]{MV}. 

Following \cite[\S 7]{A} and \cite{AH}, we say that a cosilting object $C \in \DD(\star)$ is \EMP{of cofinite type} if the t-structure $(\Perp{\leq 0}C,\Perp{>0}C)$ induced by it is compactly generated. Any cosilting object of cofinite type is pure-injective, but the converse is not always true (see \cite{BH}). The terminology extends the one from the theory of $n$-cotilting modules, see e.g. \cite{APST}, \cite{AS}. An affirmative answer to Question~\ref{Q0} then yields that all pure-injective cosilting objects in $\DD(\star)$ are of cofinite type. 

It has been recently shown in \cite[Theorem 1.6]{SS} that the heart of any compactly generated t-structure is not just Grothendieck, but even locally finitely presented Grothendieck.
\end{A}

%%%%%%%%%%%%%%%%%%%%%%%%%%
\begin{ac}
This project was started during the first author's visit to the University of Verona, supported by MSM100191801, and it was completed during the second author's visit to the Charles University. They are grateful to the universities for the hospitality and for providing excellent environments. They also thank Rosanna Laking for helpful comments and conversations on this paper.
\end{ac}
%%%%%%%%%%%%%%%%%%%%%%%%%%

%%%%%%%%%%%%%%%%%%%%%%%%%%
%%%%%%%%%%%%%%%%%%%%%%%%%%

\bibliographystyle{abbrv}

\end{document}